\documentclass[12pt]{article}
\usepackage[utf8]{inputenc}
\usepackage[T1]{fontenc}
\usepackage{authblk}
\usepackage{amsmath, amssymb, amsthm} 
\usepackage{geometry}         
\usepackage{xcolor}
\usepackage[colorlinks=true, linkcolor=blue, urlcolor=blue, citecolor=blue]{hyperref}       
\usepackage{lipsum}
\usepackage{graphicx}
\newcommand{\supp}{\textnormal{supp }} 
\newcommand{\Uloc}{\textnormal{uloc}}
\newcommand{\Ro}{\mathbb{R}}
\newcommand{\defeq}{\stackrel{\rm def}{=}}

\newcommand{\bslink}[1]{%
   \textsuperscript{\hyperref[page.1]{\textcolor{blue}{\small #1}}}%
}
\providecommand{\keywords}[1]{\noindent\textbf{Keywords:} #1\par\vspace{1em}}
\newtheorem{theorem}{Theorem}[section]
\newtheorem{lemma}[theorem]{Lemma}
\newtheorem{proposition}[theorem]{Proposition}

\newtheorem{definition}[theorem]{Definition}
\newtheorem{remark}[theorem]{Remark}

\numberwithin{equation}{section}
\title{Stability Analysis of Bounded Solutions to 2-D Incompressible 
Navier-Stokes Equations in $L^2_{\Uloc}$ Spaces}
\author[$\bslink{ a}$]{Haokun Tai}
\author[$\bslink{ a}$, $\bslink{$\ast$}$]{Xiaoxin Zheng}
\affil[a ]{School of Mathematical Sciences, Beihang University, Beijing 100191, PR China}
\date{}

\begin{document}

\maketitle

\begin{abstract}
  This paper examines the stability analysis of two-dimensional 
  incompressible fluids in $L^2_{\Uloc}$ spaces. 
  Extending the classical stability framework for Euler equations 
  in Yudovich-type settings, we establish the convergence of 
  Navier-Stokes solutions toward Euler solutions as 
  viscosity $\nu \to 0$, under the weakened assumptions 
  $u^\nu_0 \in L^\infty$ and $\omega^\nu_0 \in L^\infty$
  without imposing decay conditions at infinity.
  We first derive a priori estimates for the Navier-Stokes equations in $L^\infty$ spaces.
  Then, using harmonic analysis tools including Littlewood-Paley
  decomposition and Bony's paraproduct calculus,
  we prove the $L^\infty$-convergence of the velocity field 
  and $L^2_{\Uloc}$-convergence of the vorticity as $\nu \to 0$. 
\end{abstract}

\noindent{\bf MSC (2020):} 35Q30, 35B35, 35B40.

\keywords{Stability analysis, Yudovich solution, Euler solution,
Localization in space, Non-local effect, Extrapolation.}

\section{Introduction} \label{sec:intro}
We study the stability of two-dimensional
incompressible Navier-Stokes equations to the Cauthy problem
\begin{equation}\label{eq:n-s}
  \left.\begin{array}{r}
    \partial_t u^\nu + u^\nu \cdot \nabla u^\nu + 
    \nabla p^\nu - \nu \Delta u^\nu = g^\nu  \\
    \text{div } u^\nu = 0 \; \\
    u^\nu \big|_{t=0} = u_0^\nu
  \end{array}\right\},
\end{equation}

in the limit $\nu \to 0$, where $u^\nu =u^\nu(t,x)$ are velocity fields
and $g^\nu = g^\nu(t,x)$ are prescribed source terms,
with $(t,x) \in \Ro^{+} \times \Ro^2$, taking values in $\Ro^2$.
$\nu > 0$ are viscosity coefficient of the fluid.
The pressures $p^\nu = p^\nu(t,x)$ take values in $\Ro^1$.
By means of incompressible conditon $\text{div } u^\nu = 0$,
they also admit the following representation:

\noindent\begin{minipage}{\textwidth}
    \vspace{0.5em}
    \rule{\textwidth}{0.4pt}\\[0.2em]
    {\small $^{\ast}$ Corresponding author.}\\[0.2em]
    {\small \phantom{$^{\ast}$} \textit{E-mail addresses:} \textcolor{blue}{\texttt{haokuntai@buaa.edu.cn}} (H. Tai), 
    \textcolor{blue}{\texttt{xiaoxinzheng@buaa.edu.cn}} (X. Zheng)}
    \label{page.1}
\end{minipage}
\vspace{0.5em}

\begin{align*}
  p^\nu = R_i R_j (u^\nu_i u^\nu_j),
\end{align*}
where $R_i$ is the Riesz transform(see \cite{ref0}).

And the vorticity fields $\omega^\nu(t,x) = \nabla^\perp \cdot u^\nu$ satisfy the
following equation:
\begin{equation}\label{eq:n-s_o}
  \left.\begin{array}{r}
    \partial_t \omega^\nu + u^\nu \cdot \nabla \omega^\nu 
    - \nu \Delta \omega^\nu = \nabla \times g^\nu \\
    u^\nu = K * \omega^\nu\\
    \omega^\nu \big|_{t=0} = \omega_0^\nu
  \end{array}\right\},
\end{equation}
where the operator $K=\frac{1}{2\pi} \frac{x^\perp}{|x|^2}$ from
Biot-Savart law (see \cite{ref2}) and $x^\perp = (-x_2, x_1)$ denotes 
the perpendicular vector to $x = (x_1, x_2)$.

When the inputs $(u^\nu_0 , g^\nu)$ converge to some states $(u_0 , g)$, as $\nu$ vanishes,
it is expected that $u^\nu$ approaches $u$, the solution of the same Euler equations:
\begin{equation}\label{eq:euler}
  \left.\begin{array}{r}
    \partial_t u + u \cdot \nabla u + \nabla p = g \\
    \text{div } u = 0  \\
    u \big|_{t=0} = u_0
  \end{array}\right\}.
\end{equation}

If $(u_0 , g)$ denotes a weak limit of the family $(u^\nu_0 , g^\nu)_{\nu > 0}$,
then $u^\nu$ converges weakly to the unique solution $u$ 
of the Euler equations \eqref{eq:euler} in the sense of distributions.
The corresponding vorticity equations read as follows:
\begin{equation}\label{eq:euler_o}
  \left.\begin{array}{r}
    \partial_t \omega + u \cdot \nabla \omega = \nabla \times g \\
    u = K * \omega\\
    \omega \big|_{t=0} = \omega_0
  \end{array}\right\}.
\end{equation}

The inviscid limit problem has significant practical implications, 
as many real-world fluid flows involve very small viscosity values 
where the inviscid approximation becomes relevant.
In 1983, Kato published a landmark 
contribution \cite{ref:il0} in this direction. 
Several other notable works \cite{ref:il1, ref:il2, ref:il3, ref:il4} have 
addressed the two-dimensional case of these equations.
In addition to the viscosity coefficient,
there are also perturbation problems of solutions
caused by initial data and source terms.

In a recent contribution by Arsénio and Houamed \cite{ref1},
by the idea of compactness extrapolation,
it was demonstrated that given initial data $u^\nu_0 \in H^1$ 
and $\omega^\nu_0 \in L^\infty$, solutions of the two-dimensional 
incompressible Navier-Stokes equations converge 
in $H^1$ to the corresponding solutions of the Euler equations 
as $\nu \to 0$. 

The properties of solutions to the equation in the Yudovich class 
were first established by Yudovich in 1963 \cite{ref:y1}.
For a comprehensive overview of Yudovich class solutions,
we refer to \cite{ref4,ref11}.
Serfati \cite{ref9} proved the existence and uniqueness of 
solutions to two-dimensional Euler equations under the assumption 
that $u_0 \in L^\infty$ and $\omega_0 \in L^\infty$.
Then, Taniuchi (2004) and Bernicot (2014) extended Serfati's results \cite{ref:S2,ref:S4}.
Chen et al. (2019) discussed the results on bmo-type spaces \cite{ref:S5}.
For Navier-Stokes equations, Giga et al. (1999) 
studied global existence in the two-dimensional case\cite{ref:S1}.
Lopes et al. (2021) proved the strong convergence of the vorticities
in the vanishing viscosity limit with unbounded vorticity\cite{ref:S6}.

For stating our main results, we need to introduce the uniformly locally $L^2$ spaces:
\begin{definition}[Uniformly Locally $L^2$ Norm] \label{def:uloc_l2}
  Let $f: \Ro^2 \to \Ro$ be a measurable function. 
  The uniformly locally $L^2$ norm of $f$, 
  denoted by $\|f\|_{L^2_{\Uloc}}$, is defined as
  \[\|f\|_{L^2_{\Uloc}} = \sup_{y \in \Ro^2} 
  \left( \int_{B(y,1)} |f(x)|^2 \, dx \right)^{1/2},\] \\
  where $B(y,1)$ denotes the open ball of radius 1 centered at $y \in \Ro^2$. 
\end{definition}

This leads to our conclusions concerning Log-Lipschitz flows.
Under the weaker assumption 
that $u^\nu_0 \in L^\infty$ and $\omega^\nu_0 \in L^\infty$,
we establish two key convergence results: 
the $L^\infty$-convergence of the velocity field and 
the $L^2_{\Uloc}$-convergence of the vorticity.
While conventional results for Navier-Stokes equations
typically assume initial data in $L^2$ spaces,
our work addresses solutions under fundamentally different conditions.
We remove the restrictive decay requirement at infinity,
a departure that introduces technical challenges.

By Leray projection operator and Littlewood-Paley decomposition
(see Section \ref{sec:pre}), 
we derive a priori estimates for the Navier–Stokes equation with source terms:
\begin{theorem}\label{thm:n-s_control}
  Let $\nu \in (0,1)$ and $T \in \mathbb{R}^{+} \cup \{ \infty \}$. 
  Suppose we are given initial data satisfying
  the following conditions uniformly in $\nu$:
  \begin{align*}
    u^\nu_0 \in L^\infty(\mathbb{R}^2), \quad \omega^\nu_0 \in L^\infty(\mathbb{R}^2).
  \end{align*}
  
  The source terms $g^\nu \in L^{1}([0,T] ;L^\infty(\mathbb{R}^2))$ satisfy 
  $\mathcal{P} g^\nu \in L^{1}([0,T] ;L^\infty(\mathbb{R}^2))$
  and $\nabla \times g^\nu \in L^{1}([0,T];L^\infty(\mathbb{R}^2))$.

  Then, there exists a unique solution $(u^\nu, \omega^\nu)$ 
  to the system \eqref{eq:n-s} and \eqref{eq:n-s_o} such that 
  for all $t \in [0,T]$, the following estimates hold:
  \begin{align*}
    \| u^\nu(t) \|_{L^\infty} & \leq 
    \big(\| u^\nu_0 \|_{L^\infty}  + \| \mathcal{P} g^\nu \|_{L^1([0,t];L^\infty)}\big)
    e^{C t},\\
    \|\omega^\nu(t)\|_{L^\infty} & \leq \|\omega_0^\nu\|_{L^\infty}
    + \| \nabla \times g^\nu \|_{L^1([0,t];L^\infty)},
  \end{align*}
  where $C > 0$ is a constant independent of $\nu$ and $T$.
\end{theorem}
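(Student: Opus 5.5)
The plan is to prove the vorticity bound first, since it is the simpler of the two, and then feed it into the velocity bound.

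\textbf{Vorticity.} Equation \eqref{eq:n-s_o} is a transport-diffusion equation with divergence-free drift $u^\nu$ and forcing $\nabla\times g^\nu$. The weak maximum principle for $\partial_t-\nu\Delta+u^\nu\cdot\nabla$ on the whole plane gives
\[
\|\omega^\nu(t)\|_{L^\infty}\le\|\omega^\nu_0\|_{L^\infty}+\int_0^t\|\nabla\times g^\nu(s)\|_{L^\infty}\,ds .
\]
This bound is uniform in $\nu$, so no decay of $\omega^\nu$ is needed. To make the argument rigorous on $\mathbb{R}^2$ without decay at infinity, I will do one of two things. The first option is to use a stochastic (Feynman–Kac) representation along the stochastic flow $dX=u^\nu\,dt+\sqrt{2\nu}\,dB$. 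The second is to compare with $\pm(\|\omega_0^\nu\|_{L^\infty}+\int_0^t\|\nabla\times g^\nu\|_{L^\infty})+\varepsilon(|x|^2+Ct)$ and let $\varepsilon\to0$; this uses only that $u^\nu$ is bounded, so the drift term $\varepsilon\,u^\nu\cdot 2x$ is absorbed by growth in time on bounded time intervals.

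\textbf{Velocity.} Apply the Leray projector $\mathcal{P}$ to \eqref{eq:n-s} to get $\partial_t u^\nu-\nu\Delta u^\nu=-\mathcal{P}\,\mathrm{div}(u^\nu\otimes u^\nu)+\mathcal{P}g^\nu$. Since the nonlinear term is not controlled in $L^\infty$ by $\|u^\nu\|_{L^\infty}^2$ alone, I will use Serfati's splitting with a Littlewood–Paley cutoff. Write $u^\nu=\Delta_{-1}u^\nu+(\mathrm{Id}-\Delta_{-1})u^\nu$.
\begin{itemize}
\item For the high frequencies, Bernstein's inequality gives $\|(\mathrm{Id}-\Delta_{-1})u^\nu\|_{L^\infty}\le C\sum_{q\ge0}2^{-q}\|\Delta_q\omega^\nu\|_{L^\infty}\le C\|\omega^\nu\|_{L^\infty}$, so this piece is already controlled by the vorticity bound.
\item For the low-frequency part $\Delta_{-1}u^\nu$, write its Duhamel formula with the heat semigroup, whose kernel has $L^1$ norm $1$ uniformly in $\nu$. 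The forcing contributes $\int_0^t\|\mathcal{P}g^\nu\|_{L^\infty}$. The nonlinear term becomes $\Delta_{-1}\mathcal{P}\,\mathrm{div}(u^\nu\otimes u^\nu)$.
\item The operator $\Delta_{-1}\mathcal{P}\,\mathrm{div}$ is not bounded on $L^\infty$ because of the non-decaying Riesz kernel. To handle this, rewrite $\mathrm{div}(u\otimes u)=u\cdot\nabla u=\omega u^\perp+\nabla\tfrac{|u|^2}{2}$. The gradient part is killed by $\mathcal{P}$, and what remains is $\Delta_{-1}\mathcal{P}(\omega^\nu (u^\nu)^\perp)$.
\item Use the frequency-localized kernel of $\Delta_{-1}\mathcal{P}$ together with a further splitting of $\mathcal{P}=\mathrm{Id}+\nabla(-\Delta)^{-1}\mathrm{div}$ via a smooth cutoff at scale $1$ in physical space (Serfati's trick). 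The local part has an integrable kernel. The far-field part is handled by moving a derivative onto the kernel $\nabla^2\Gamma(1-\chi)$, which is integrable and decays like $|x|^{-3}$.
\end{itemize}
Either route gives $\|\Delta_{-1}\mathcal{P}(\omega u^\perp)\|_{L^\infty}\le C\|\omega\|_{L^\infty}\|u\|_{L^\infty}$. Combining,
\[
\|u^\nu(t)\|_{L^\infty}\le\|u_0^\nu\|_{L^\infty}+\|\mathcal{P}g^\nu\|_{L^1_tL^\infty}+C\|\omega^\nu\|_{L^\infty_tL^\infty}+C\int_0^t\|\omega^\nu\|_{L^\infty}\|u^\nu\|_{L^\infty}\,ds .
\]
Gronwall then yields exponential growth. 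Absorbing the vorticity factors (uniformly bounded by the first step) into the constant gives the stated form $(\|u_0^\nu\|_{L^\infty}+\|\mathcal{P}g^\nu\|_{L^1L^\infty})e^{Ct}$. Strictly, $C$ depends on the vorticity bound, and the pure-vorticity term must be absorbed as well; I will accept $C$ depending on the data norms, reading the statement's constant as independent of $\nu,T$ only.

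\textbf{Existence and uniqueness.} Existence follows by mollifying the data, using classical local solutions of Giga et al.\ \cite{ref:S1} type in $L^\infty$, extending them globally via the a priori bounds above, and passing to the limit by compactness. Uniqueness follows from an energy estimate in $L^2_{\Uloc}$ (or $L^\infty$) for the difference of two solutions, using the log-Lipschitz bound on $u^\nu$ and an Osgood-type argument.

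The main obstacle is the $L^\infty$ bound on $\Delta_{-1}\mathcal{P}\,\mathrm{div}(u\otimes u)$ without decay. There I will lean on the kernel cutoff and the $\omega u^\perp$ identity.
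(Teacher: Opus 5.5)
The gap is the estimate your velocity argument rests on, $\|\Delta_{-1}\mathcal{P}(\omega u^\perp)\|_{L^\infty}\le C\|\omega\|_{L^\infty}\|u\|_{L^\infty}$. It is false, and the route you sketch cannot give it.

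You also have the boundedness backwards. $\Delta_{-1}\mathcal{P}\,\mathrm{div}$ \emph{is} bounded on $L^\infty$: its symbol is $\chi(\xi)$ times a function homogeneous of degree one, so its kernel is $O(|x|^{-3})$ and integrable on $\Ro^2$, which gives $C\|u\|_{L^\infty}^2$. It is $\Delta_{-1}\mathcal{P}$, with kernel $\sim|x|^{-2}$, that is unbounded. So passing to $\omega u^\perp$ trades a bounded operator for an unbounded one. Moving a derivative onto $\nabla^2\Gamma(1-\chi)$ in the far field means undoing that rewriting via $\omega u^\perp=\mathrm{div}(u\otimes u)-\nabla\frac{|u|^2}{2}$. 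The far field therefore again costs $\|u\|_{L^\infty}^2$, not $\|\omega\|_{L^\infty}\|u\|_{L^\infty}$.

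Here is a counterexample. Let $\omega=\mathrm{sign}(x_1x_2)\mathbf{1}_{B(0,L)}$, $v=K*\omega$ (so $\|v\|_{L^\infty}\le L$), and $u=Ue_1+v$. Since $\omega e_2=\partial_1 v-\nabla v_1$, we get $\mathcal{P}(\omega\,Ue_1^\perp)=U\partial_1 v$. The Bahouri--Chemin logarithm gives $\Delta_{-1}\partial_1 v_1(0)=\frac{2}{\pi}\log L+O(1)$. Meanwhile $\|\Delta_{-1}\mathcal{P}(\omega v^\perp)\|_{L^\infty}\le C\|v\|_{L^\infty}^2\le CL^2$. Taking $U=L^3$, the left side is $\gtrsim L^3\log L$ while the right side is $\lesssim L^3$.

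So with a splitting at a fixed scale, and $W=\sup_{s\le t}\|\omega(s)\|_{L^\infty}$, what you can actually prove is
$\|u(t)\|_{L^\infty}\le C(\|u_0\|_{L^\infty}+\|\mathcal{P}g\|_{L^1_tL^\infty}+W)+C\int_0^t\|u\|_{L^\infty}^2\,ds$.
This is a Riccati-type inequality. It controls $u$ only for short times, not on $[0,T]$ for arbitrary $T$ or $T=\infty$.

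The missing idea is that the splitting scale must adapt to the sizes of $u$ and $\omega$. The paper cuts at frequency $2^N$. It pays $C2^N\|u\|_{L^\infty}^2$ for the low frequencies and, via Bony, $C2^{-N/2}\|u\|_{L^\infty}^{1/2}\|\omega\|_{L^\infty}^{3/2}+C2^{-N}\|\omega\|_{L^\infty}^2$ for the high ones. It then takes $2^N\simeq\|\omega\|_{L^\infty}/\|u\|_{L^\infty}$, which makes the bound linear in $\|u\|_{L^\infty}$, so Gronwall gives $e^{C\|\omega\|_{L^\infty}t}$.

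Within your own framework, one repair is as follows. Use $\|P_{\geq N}u\|_{L^\infty}\le C2^{-N}W$ and $\|P_{<N}\mathcal{P}\,\mathrm{div}(u\otimes u)\|_{L^\infty}\le C2^N\|u\|_{L^\infty}^2$ on time windows of length $\tau\le c/W$. Choose $2^N\simeq(\tau M)^{-1}$, where $M$ is the supremum of $\|u\|_{L^\infty}$ over the window. This gives $M\le 2\big(\|u(t_0)\|_{L^\infty}+C\int_{t_0}^{t_0+\tau}\|\mathcal{P}g\|_{L^\infty}\big)$, and iterating over windows yields $e^{CWt}$ growth, though with a multiplicative constant in front.

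If you follow the paper's Bony computation instead, note that it really estimates $\nabla p$, which does not see constant shifts of $u$. It does not estimate $\mathcal{P}(u\cdot\nabla u)=u\cdot\nabla u+\nabla p$, and the example above shows the latter is not even bounded. A clean route to exactly the stated form, without the additive $CW$ term, is the maximum principle for $\partial_t u+u\cdot\nabla u-\nu\Delta u=-\nabla p+\mathcal{P}g$ combined with $\|\nabla p\|_{L^\infty}\lesssim\|u\|_{L^\infty}\|\omega\|_{L^\infty}$.

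Finally, your vorticity barrier $\varepsilon(|x|^2+Ct)$ fails, because $2\varepsilon u\cdot x$ is unbounded in $x$. Use $\varepsilon(\sqrt{1+|x|^2}+Ct)$ instead.
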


For the Euler equations, by \cite{ref9}, 
we can obtain results of similar form with source terms:
\begin{theorem}\label{thm:euler_control}
  Let $T \in \mathbb{R}^{+} \cup \{ \infty \}$ and
  $(u, \omega)$ be a solution to this system 
  with any initial data and source satisfying
  \begin{align*}
    u_0 \in L^\infty(\Ro^2), \quad \omega_0 \in L^\infty(\Ro^2).
  \end{align*}
  
  The source terms $g \in L^{1}([0,T] ;L^\infty(\mathbb{R}^2))$ satisfy 
  $\mathcal{P} g \in L^{1}([0,T] ;L^\infty(\mathbb{R}^2))$
  and $\nabla \times g \in L^{1}([0,T];L^\infty(\mathbb{R}^2))$.

  Then, for all $t \in [0,T]$, the following estimates hold:
  \begin{align*}
    \| u(t) \|_{L^\infty} & \leq \big(\| u_0 \|_{L^\infty} 
    + \| \mathcal{P} g \|_{L^1([0,t];L^\infty)}\big) 
    e^{C t},\\
    \|\omega(t)\|_{L^\infty} & \leq \|\omega_0\|_{L^\infty} + \| \nabla \times g \|_{L^1([0,t];L^\infty)}.
  \end{align*}
\end{theorem}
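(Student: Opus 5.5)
The vorticity bound and the velocity bound are handled separately, the first by a maximum principle and the second by a Duhamel formula for the Leray-projected equation. Existence and uniqueness of $(u,\omega)$ in the Serfati class are taken from \cite{ref9} (the version with a source term goes through with the same proof), so only the two estimates need proof. I would first derive them for smooth solutions and then pass to the limit by mollifying the data and the source, using lower semicontinuity of the $L^\infty$ norm under weak-$*$ convergence.

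\emph{Vorticity.} Equation \eqref{eq:euler_o} is a transport equation with forcing $\nabla\times g$. The velocity $u$ is divergence free and log-Lipschitz, so it generates a unique measure-preserving flow $X(t,\cdot)$. Along characteristics,
\[
\omega(t,X(t,x))=\omega_0(x)+\int_0^t (\nabla\times g)(s,X(s,x))\,ds .
\]
Taking the supremum in $x$, and using that $X(s,\cdot)$ is a homeomorphism of $\Ro^2$ so that composition preserves the $L^\infty$ norm, gives the second estimate immediately.

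\emph{Velocity.} I apply the Leray projection $\mathcal P$ to \eqref{eq:euler}. Since $\mathcal P u = u$ and $\mathcal P\nabla p=0$, this gives $\partial_t u=-\mathcal P(u\cdot\nabla u)+\mathcal P g$. Integrating in time,
\[
\|u(t)\|_{L^\infty}\le \|u_0\|_{L^\infty}+\|\mathcal P g\|_{L^1([0,t];L^\infty)}+\int_0^t\|\mathcal P(u\cdot\nabla u)(s)\|_{L^\infty}\,ds .
\]
It then suffices to bound $\|\mathcal P(u\cdot\nabla u)\|_{L^\infty}\le C\|u\|_{L^\infty}$ with $C$ depending only on $\|\omega\|_{L^\infty}$, which is already controlled by the vorticity estimate. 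Gronwall then yields the factor $e^{Ct}$.

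\emph{Main obstacle: the bound on $\mathcal P(u\cdot\nabla u)$.} This is where the paper's $L^\infty$ setting is delicate, because $\mathcal P$ is not bounded on $L^\infty$. I would follow Serfati's idea and write $u\cdot\nabla u=\nabla\cdot(u\otimes u)$, so that $\mathcal P(u\cdot\nabla u)=\nabla\cdot(u\otimes u)+\nabla R_iR_j(u_iu_j)$. Using a Littlewood--Paley splitting $1=\Delta_{-1}+(1-\Delta_{-1})$ on the pressure term, the two pieces are handled as follows.
\begin{itemize}
\item \emph{Low frequencies.} The operator $\nabla\Delta_{-1}R_iR_j$ has an integrable kernel, so this piece is bounded by $C\|u\|_{L^\infty}^2$.
\item \emph{High frequencies.} Divergence-freeness gives $\partial_i\partial_j(u_iu_j)=\partial_iu_j\,\partial_ju_i$. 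I would then use Bony's decomposition and the bound $\|\nabla u\|_{B^0_{\infty,\infty}}\lesssim\|\omega\|_{L^\infty}$, together with the gain of one derivative from $\nabla\Delta^{-1}$ at high frequency. A direct estimate of the form $\|u\|_{L^\infty}\|\omega\|_{L^\infty}$ is delicate here, since $\nabla u\in B^0_{\infty,\infty}$ only. My first attempt would be to put one full derivative on $u$ through the commutator form $\partial_j\bigl(u_i\partial_iu_j\bigr)$ and use $\Delta_q\nabla\Delta^{-1}\partial_j$ bounded on $L^\infty$ for $q\ge0$.
\end{itemize}
The result is a bound of the form $C(\|\omega\|_{L^\infty})\|u\|_{L^\infty}$, possibly plus $\|u\|_{L^\infty}^2$. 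If a quadratic term survives, I would instead work with Serfati's identity, writing $u(t)-u_0$ through the kernel $(1-a)K$ plus a local part controlled by $\omega$. Gronwall then gives a growth rate depending on $\|\omega\|_{L^\infty}$ and remaining linear in $\|u\|_{L^\infty}$, which is the claimed bound with constant $C$.
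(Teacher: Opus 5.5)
Your vorticity bound is fine; it is the paper's maximum-principle step done along characteristics. For the velocity you follow the paper's route too: its proof of this theorem is a citation of Serfati plus the argument of Section~\ref{sec:est}. That route integrates the Leray-projected equation in time, bounds $\|\mathcal P(u\cdot\nabla u)\|_{L^\infty}$ linearly in $\|u\|_{L^\infty}$ via Littlewood--Paley and Bony, and closes with Gronwall.

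\textbf{The gap.} The bound $\|\mathcal P(u\cdot\nabla u)\|_{L^\infty}\le C\|u\|_{L^\infty}$ is false in this class. As you write, $\mathcal P(u\cdot\nabla u)=u\cdot\nabla u+\nabla p$, but your estimates only treat the pressure. The transport part is generically unbounded, because $\nabla u$ is only in BMO.
\begin{itemize}
\item For a square vortex patch, $\nabla u_0$ has a $\log|x|$ singularity at each corner while $u_0\neq0$ there. So $u_0\cdot\nabla u_0\notin L^\infty$ although $\nabla p_0$ is bounded; equivalently $\partial_t u\notin L^\infty$.
\item For mollified data, the constant in your Gronwall step therefore blows up like the logarithm of the inverse mollification scale, so passing to the limit fails.
\item In dyadic terms, the blocks of the transport paraproduct $T_{u_i}\partial_i u$ are only bounded by $C\|u\|_{L^\infty}\|\omega\|_{L^\infty}$, with no decay in $q$. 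No threshold $N$ makes the high-frequency sum converge.
\item Your commutator attempt has the same defect. Writing $\partial_iu_j\,\partial_ju_i=\partial_j(u_i\partial_iu_j)$ turns $\nabla p$ into an order-zero operator applied to $u\cdot\nabla u$, which only gives $\nabla p\in B^0_{\infty,\infty}$.
\end{itemize}
The paper's Section~\ref{sec:est} passes over exactly this point. It asserts $\mathcal P(u\cdot\nabla u)=\nabla p$, whereas in fact $(I-\mathcal P)(u\cdot\nabla u)=-\nabla p$, and then swaps the transport paraproduct for a pressure term. So following the paper does not close the gap.

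\textbf{The fix.} Let the flow absorb the transport term, exactly as you already do for $\omega$. Set $-\Delta p_u=\partial_iu_j\,\partial_ju_i$. Then $\frac{d}{dt}u(t,X(t,x))=(-\nabla p_u+\mathcal Pg)(t,X(t,x))$, and since $X(t,\cdot)$ is a measure-preserving homeomorphism,
\[
\|u(t)\|_{L^\infty}\le \|u_0\|_{L^\infty}+\int_0^t\bigl(\|\nabla p_u(s)\|_{L^\infty}+\|\mathcal Pg(s)\|_{L^\infty}\bigr)\,ds .
\]
Only the pressure now needs an $L^\infty$ bound, and your tools give it:
\begin{itemize}
\item Bony-decompose $\partial_iu_j\,\partial_ju_i$, and rewrite the remainder as $\partial_i\partial_j\dot R(u_i,u_j)$ using $\operatorname{div}u=0$.
\item Use $\|\dot P_ku\|_{L^\infty}\lesssim\min(\|u\|_{L^\infty},2^{-k}\|\omega\|_{L^\infty})$ and $\|\dot P_k\nabla u\|_{L^\infty}\lesssim\min(2^k\|u\|_{L^\infty},\|\omega\|_{L^\infty})$.
\item Split the frequencies at $2^N\simeq\|\omega\|_{L^\infty}/\|u\|_{L^\infty}$.
\end{itemize}
The factor $2^{-q}$ coming from $\nabla\Delta^{-1}$ absorbs the linear growth in $q$ of $\|\dot P_{<q}\nabla u\|_{L^\infty}$. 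This gives $\|\nabla p_u\|_{L^\infty}\le C\|u\|_{L^\infty}\|\omega\|_{L^\infty}$, and Gronwall then yields the stated estimate with exponent $C\int_0^t\|\omega(s)\|_{L^\infty}\,ds$.

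Your Serfati-identity fallback is also a workable fix, but it has to be the main argument, not a contingency for a surviving quadratic term. Choosing the cutoff scale separately for each $t$ gives
\[
\|u(t)\|_{L^\infty}\le\|u_0\|_{L^\infty}+C\bigl(\sup_s\|\omega(s)\|_{L^\infty}\int_0^t\|u\|_{L^\infty}^2\,ds\bigr)^{1/2}
\]
plus forcing terms. This yields the exponential bound only up to a multiplicative constant.
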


Since, we establish the convergence results for solutions 
of the Navier-Stokes equations to those of the Euler equations 
as the viscosity parameter $\nu$ tends to zero. 
Our main findings are presented in the following theorems:

\begin{theorem}\label{thm:convergence}
  Let $T \in \mathbb{R}^+$.
  Consider a family of initial data $(u^\nu_0, g^\nu)_{\nu > 0}$ 
  satisfying the assumptions of Theorems \ref{thm:n-s_control} 
  and \ref{thm:euler_control}, and converging to $ (u_0, g) $ 
  strongly in $ L^\infty(\Ro^2) \times L^1([0,T];L^\infty(\Ro^2))$ as $ \nu \to 0 $.
  Then, it holds that:
  \[
    \lim_{\nu \to 0} \sup_{t \in [0,T]} \| u^\nu(t) - u(t) \|_{L^\infty} = 0,
  \]
  where $ u^\nu $ and $ u $ are the unique solutions 
  of \eqref{eq:n-s} and \eqref{eq:euler}, respectively.
\end{theorem}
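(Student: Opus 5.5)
We prove Theorem \ref{thm:convergence} by combining the uniform bounds of Theorems \ref{thm:n-s_control} and \ref{thm:euler_control} with a low/high frequency splitting of the difference $w^\nu = u^\nu - u$, in the spirit of Serfati's argument and the extrapolation idea of \cite{ref1}.

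By Theorems \ref{thm:n-s_control} and \ref{thm:euler_control}, together with the strong convergence of the data, there is a constant $M$, independent of $\nu \in (0,1)$, such that
\[
\sup_{t\le T}\big(\|u^\nu\|_{L^\infty}+\|\omega^\nu\|_{L^\infty}+\|u\|_{L^\infty}+\|\omega\|_{L^\infty}\big)\le M .
\]
In particular $u^\nu$ and $u$ are uniformly log-Lipschitz. The difference $w^\nu$ solves
\[
\partial_t w^\nu+\mathcal P\big(u^\nu\cdot\nabla w^\nu+w^\nu\cdot\nabla u\big)=\nu\,\mathcal P\Delta u^\nu+\mathcal P(g^\nu-g),
\]
where the divergence-free structure has been used to write the nonlinear terms as $\mathcal{P}\,\mathrm{div}(u^\nu\otimes w^\nu+w^\nu\otimes u)$.

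Fix $N$ and split $w^\nu = S_N w^\nu + (\mathrm{Id}-S_N)w^\nu$.

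\textbf{High frequencies.} By Bernstein's inequality and the uniform vorticity bound,
\[
\|(\mathrm{Id}-S_N)w^\nu\|_{L^\infty}\le \sum_{q\ge N}\|\Delta_q w^\nu\|_{L^\infty}\lesssim \sum_{q\ge N}2^{-q}\big(\|\omega^\nu\|_{L^\infty}+\|\omega\|_{L^\infty}\big)\lesssim 2^{-N}M .
\]

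\textbf{Low frequencies.} Apply $S_N$ to the equation. On low frequencies $\mathcal P\,\mathrm{div}$ is not bounded on $L^\infty$ because of the kernel's slow decay, so we use Serfati's identity: write $\mathcal P\,\mathrm{div} = \mathcal P\,\mathrm{div}\,\chi + \mathcal P\,\mathrm{div}(1-\chi)$, with $\chi$ a cutoff near the origin of the kernel at scale $\lambda$. The far part is rewritten through the vorticity formula for the commutator $\mathcal{P}\,\mathrm{div}(a\otimes b)$ with $a,b$ divergence-free. Bounding the near part by $\lambda$ and the far part by $\log(1/\lambda)$ yields
\[
\frac{d}{dt}\|S_Nw^\nu\|_{L^\infty}\lesssim \|w^\nu\|_{L^\infty}\big(1+\log^+\tfrac{1}{\|w^\nu\|_{L^\infty}}\big)\,M+\nu 2^{N}M+\|\mathcal P(g^\nu-g)\|_{L^\infty}.
\]
The log-Lipschitz character of the velocities is handled through the Bony decomposition $u^\nu\cdot\nabla w^\nu = T_{u^\nu}\nabla w^\nu + T_{\nabla w^\nu}u^\nu + R(u^\nu,\nabla w^\nu)$ at frequency level $N$, and $\nu\,S_N\Delta u^\nu$ is bounded by $\nu 2^N M$ via Bernstein.

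\textbf{Closing the estimate.} Adding the two pieces and setting $\delta_\nu = \|u^\nu_0-u_0\|_{L^\infty}+\|\mathcal P(g^\nu-g)\|_{L^1_tL^\infty}$, we get for $\eta(t)=\sup_{s\le t}\|w^\nu(s)\|_{L^\infty}$ an Osgood-type inequality
\[
\eta(t)\le C\big(\delta_\nu+2^{-N}+T\nu 2^N\big)+C\int_0^t \mu(\eta(s))\,ds,
\qquad \mu(r)=r\big(1+\log^+ \tfrac1r\big).
\]
Here we need $\mathcal P g^\nu \to \mathcal P g$ in $L^1L^\infty$, which we would either infer from the hypotheses or add as an assumption. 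Choosing $2^N = \nu^{-1/2}$ makes the forcing term $\lesssim \delta_\nu + \nu^{1/2}\to 0$. Osgood's lemma, which gives a bound of the form $\eta\lesssim \epsilon^{\exp(-CT)}$, then yields $\sup_{t\le T}\|w^\nu(t)\|_{L^\infty}\to0$.

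\textbf{Main obstacle.} The hard step is the low-frequency $L^\infty$ estimate of the pressure/Leray term without any decay at infinity, namely producing the $\mu(\eta)$ bound uniformly in $\nu$. I would try Serfati's localized kernel splitting first, optimizing $\lambda$ in terms of $\eta$. If that fails, I would fall back on obtaining $L^2_{\Uloc}$ control of $w^\nu$ by a localized energy estimate, then interpolating with the uniform $\omega$ bound to recover $L^\infty$.
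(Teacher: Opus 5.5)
There is a genuine gap, and it sits exactly at the step you flag as the main obstacle. The inequality $\frac{d}{dt}\|S_Nw^\nu\|_{L^\infty}\lesssim \mu(\|w^\nu\|_{L^\infty})M+\nu 2^N M+\dots$ is asserted rather than derived, and the mechanism you sketch cannot produce it. In Serfati's splitting, the far kernel $\nabla\nabla^\perp[(1-a_\lambda)K]$ is homogeneous of degree $-3$ away from the origin (equivalently, so is the kernel of $\mathcal{P}\,\mathrm{div}$). Its $L^1$ norm is therefore of order $\lambda^{-1}$, not $\log(1/\lambda)$. The near part is $(a_\lambda K)*(\delta\omega(t)-\delta\omega(0))$, and in an Eulerian argument you can only bound it by $C\lambda\|\delta\omega\|_{L^\infty}\le C\lambda M$, which is not small in terms of $\eta$. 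Optimizing in $\lambda$ gives only $\eta(t)\lesssim \delta_\nu+M\big(\int_0^t\eta\,ds+\nu t\big)^{1/2}$. This is consistent with $\eta$ of size $M^2t$, so it yields no convergence. In Serfati's uniqueness argument the near term is instead controlled through the Lagrangian representation $\omega_i(t)=\omega_0\circ X_i(t)^{-1}$, which bounds it by $\|X_1(t)-X_2(t)\|_{L^\infty}$ times a logarithm. Since $\omega^\nu$ is not transported by a flow, that step is unavailable here.

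Independently, applying $S_N$ to a transport equation whose field is only log-Lipschitz makes the commutator $[S_N,u^\nu\cdot\nabla]w^\nu$ cost a factor $N$. Estimating $S_N(w^\nu\cdot\nabla u)$ in $L^\infty$, where $\nabla u\notin L^\infty$, also costs powers of $N$. With $2^N=\nu^{-1/2}$ fixed, a single factor $N$ in the Gronwall coefficient already gives growth $e^{CNMT}=\nu^{-cMT}$, which swamps the $\nu^{1/2}$ gain unless $T$ is small. So the $N$-independent Osgood modulus in your closing step is not justified.

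Your fallback is precisely the paper's proof, and it should be your main line, because the localized $L^2$ energy structure removes both obstacles:
\begin{itemize}
\item Testing the equation for $\varphi_{R,y}\delta u$ against $\varphi_{R,y}\delta u$ kills the transport term up to a cut-off error of order $R^{-1}\|u^\nu\|_{L^\infty}$, with no commutator.
\item The pressure is split with $\varphi_{4R,y}$. The near part is controlled by the $L^2$-boundedness of $R_iR_j$, which fails on $L^\infty$ and is why Serfati must pass through the vorticity. The far part is controlled by an annular decomposition with the $|x-z|^{-3}$ kernel. Both are bounded by $C(\|u^\nu\|_{L^\infty}+\|u\|_{L^\infty})\|\delta u\|_{L^2_{\Uloc}}$.
\item The stretching term $\int(\varphi_{R,y}\delta u\cdot\nabla u)\cdot\varphi_{R,y}\delta u$ is handled by H\"older with $\|\varphi_{2R,y}\nabla u\|_{L^p}\le Cp\|\omega\|_{L^\infty}+C\|u\|_{L^\infty}$; optimizing in $p$ produces the $r\log(1/r)$ modulus.
\item The viscous term is handled by the localized enstrophy bound $\nu\int_0^t\|\varphi_{R,y}\nabla\omega^\nu\|_{L^2}^2\,ds\le C(T)$, giving an $O((\nu t)^{1/2})$ contribution.
\end{itemize}
An Osgood argument then gives $\sup_{t\le T}\|\delta u\|_{L^2_{\Uloc}}\to 0$. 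Your high-frequency bound, combined with $\|S_Nf\|_{L^\infty}\lesssim 2^N\|f\|_{L^2_{\Uloc}}$, yields $\|\delta u\|_{L^\infty}\lesssim \|\delta u\|_{L^2_{\Uloc}}^{1/2}M^{1/2}$. This is the interpolation inequality of Proposition \ref{prop:inequality_uloc_l2} that the paper uses to conclude.
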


For the vorticity field, we have:
\begin{theorem}\label{thm:convergence_omega}
  Let $T \in \mathbb{R}^+$.
  Consider a family of initial data $( \omega^\nu_0, \nabla \times g^\nu)_{\nu > 0}$ 
  satisfying the assumptions of Theorems \ref{thm:n-s_control} 
  and \ref{thm:euler_control}, and converging to $ ( \omega_0, \nabla \times g) $ 
  strongly in $ L^\infty(\Ro^2) \times L^1([0,T];L^\infty(\Ro^2))$ as $ \nu \to 0 $.
  Then, it holds that:
  \[
    \lim_{\nu \to 0} \sup_{t \in [0,T]} \| \omega^\nu(t) - \omega(t) \|_{L^2_{\Uloc}} = 0,
  \]
  where $ \omega^\nu $ and $ \omega $ are the unique solutions 
  of \eqref{eq:n-s_o} and \eqref{eq:euler_o}, respectively.
\end{theorem}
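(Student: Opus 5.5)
We prove Theorem \ref{thm:convergence_omega} by a localized $L^2$ energy estimate for the vorticity difference $\eta^\nu = \omega^\nu - \omega$, using Theorem \ref{thm:convergence} to control the transport error. Subtracting \eqref{eq:euler_o} from \eqref{eq:n-s_o} gives
\begin{align*}
  \partial_t \eta^\nu + u^\nu\cdot\nabla \eta^\nu - \nu\Delta\eta^\nu
  = -(u^\nu - u)\cdot\nabla\omega + \nu\Delta\omega + \nabla\times(g^\nu - g).
\end{align*}
The terms $(u^\nu-u)\cdot\nabla\omega$ and $\nu\Delta\omega$ involve derivatives of $\omega$, which is only bounded. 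So we cannot estimate them directly. Instead we regularize the Euler data. Fix $\delta>0$ and let $\omega_{0,\delta}=S_N\omega_0$ and $g_\delta$ be low-frequency (Littlewood--Paley) truncations, with $N=N(\delta)$. Let $\omega_\delta$ denote the corresponding Euler solution. We then split
\begin{align*}
  \omega^\nu-\omega=(\omega^\nu-\omega^\nu_\delta)+(\omega^\nu_\delta-\omega_\delta)+(\omega_\delta-\omega),
\end{align*}
where $\omega^\nu_\delta$ is the Navier--Stokes solution with the truncated data. This is the compactness-extrapolation idea of \cite{ref1}, transplanted to the uniformly local setting.

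\textbf{Step 1: localized energy identity.} Take a smooth cutoff $\phi_y$ equal to $1$ on $B(y,1)$, supported in $B(y,2)$. Better, use a weight $\phi_y(x)=(1+|x-y|^2)^{-1}$, whose derivatives are bounded by $\phi_y$ itself, so the cutoff terms close. For a difference $\theta$ of two vorticities driven by velocities $v_1,v_2$ (transport by $v_1$), multiply by $\theta\phi_y$ and integrate. This yields
\begin{align*}
  \frac{d}{dt}\int \theta^2\phi_y
  \le \int \theta^2 |v_1|\,|\nabla\phi_y| + \nu\int\theta^2|\Delta\phi_y| + 2\Big|\int (v_1-v_2)\cdot\nabla\omega_2\,\theta\phi_y\Big| + \cdots.
\end{align*}
By Theorem \ref{thm:n-s_control}, $|v_1|$ is bounded in $L^\infty$ uniformly in $\nu$. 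Hence the first two terms are at most $C\int\theta^2\phi_y$. Note that $\|\cdot\|_{L^2_{\Uloc}}$ is equivalent to $\sup_y(\int|\cdot|^2\phi_y)^{1/2}$, since $\sum_{k\in\mathbb{Z}^2}(1+|k|^2)^{-1}$ diverges only logarithmically. If this divergence is a problem, we use the weight $(1+|x-y|^2)^{-2}$ instead.

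\textbf{Step 2: the three pieces.}
\begin{itemize}
  \item The middle piece $\omega^\nu_\delta-\omega_\delta$ has smooth Euler data, so $\nabla\omega_\delta\in L^\infty$ and $\Delta\omega_\delta\in L^\infty$ with bounds $C(\delta)$; these bounds grow like $e^{e^{Ct}}$ powers of $2^N$. The forcing term is then bounded by $C(\delta)\,\|u^\nu_\delta-u_\delta\|_{L^\infty}\int|\theta|\phi_y + \nu C(\delta)\int|\theta|\phi_y$. By Theorem \ref{thm:convergence} and Gronwall, this tends to $0$ as $\nu\to0$ for fixed $\delta$.
  \item For the outer pieces, it suffices to show $\sup_t\|\omega_\delta-\omega\|_{L^2_{\Uloc}}\to0$ as $\delta\to0$, uniformly in $\nu$ for the analogous Navier--Stokes piece. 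Here the $\nabla\omega_2$ term is the obstacle, because $\omega_2$ is rough. We avoid it by using the Lagrangian structure in the Euler case. Because the data converge strongly in $L^\infty$, interpolation against the uniform $L^\infty$ bounds is not enough; instead we exploit that the difference of the flows is controlled.
\end{itemize}

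\textbf{Step 3 (main obstacle): the rough terms.} I would handle the rough terms by writing them as $\int (v_1-v_2)\otimes\theta : \nabla(\phi_y)\,\omega_2$ plus $\int \omega_2\,(v_1-v_2)\cdot\nabla\theta\,\phi_y$. Then I would use the Yudovich trick: control the $L^p_{\rm loc}$ norm of $\theta$ with $p\to\infty$, and the Log-Lipschitz bound on the velocity from Theorem \ref{thm:euler_control}. This produces the differential inequality $y'\le C y^{1-1/p}p$ for $y=\sup_y\int\theta^2\phi_y$. It gives $y(t)\le (y(0)+o(1))^{e^{-Ct}}$, which vanishes as the data difference and $\nu$ tend to $0$. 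The key point I would verify is that the velocity difference in a localized $L^2$ norm is bounded by the vorticity difference through a Serfati-type splitting of the Biot--Savart kernel into near field (controlled by $L^2_{\Uloc}$ and $L^p$) and far field (controlled by $\|u^\nu-u\|_{L^\infty}$, which tends to $0$ by Theorem \ref{thm:convergence}). Combining the three pieces, letting first $\nu\to0$ and then $\delta\to0$, gives the claim.
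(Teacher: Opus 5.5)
\textbf{There is a genuine gap, and no sharper estimate will close it.} The statement is false as written, and the paper's own proof fails at the same point as yours.

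\textbf{Where your argument breaks.} Step 3 ends with $y(t)\le (y(0)+o(1))^{e^{-Ct}}$. Everything therefore rests on $y(0)=\sup_y\int|\omega_0-S_N\omega_0|^2\phi_y\to0$ as $N\to\infty$, and on the analogous smallness of the viscous outer piece, uniformly in $\nu$. For bounded vorticity without decay this is false, because $S_N$ does not converge to the identity in $L^2_{\Uloc}$ on $L^\infty$. Take $\omega_0(x)=\sin(x_1^2)$: its local frequency $2|x_1|$ is unbounded, so $S_N\omega_0$ is negligible on $\{|x_1|\gg 2^N\}$, and $\|S_N\omega_0-\omega_0\|_{L^2_{\Uloc}}\ge c>0$ for every $N$.

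Independently, the inequality $y'\le Cp\,y^{1-1/p}$ is not justified. After integrating the rough forcing by parts you are left with $\int\omega_2\,(v_1-v_2)\cdot\nabla\theta\,\phi_y$, and nothing controls $\nabla\theta$: Euler has no dissipation, and Navier--Stokes only gives $\nu\int|\nabla\theta|^2\phi_y$. The Yudovich $p\to\infty$ trick closes for velocity differences, via $\int\delta u\cdot\nabla u\cdot\delta u$ with $\sup_y\|\nabla u\|_{L^p(B(y,1))}=O(p)$. That is what yields Theorem \ref{thm:convergence}; it does not work for vorticity differences.

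\textbf{Why no repair is possible.} Take $g^\nu=g=0$ and $u_0^\nu=u_0=(0,F(x_1))$ with $F(s)=\int_0^s\sin(\tau^2)\,d\tau$. This $F$ is bounded, since $\int_0^\infty\sin(\tau^2)\,d\tau$ converges, and $\omega_0=\sin(x_1^2)$. For such shear flows $(u\cdot\nabla)u=0$, so the Euler (Serfati) solution is stationary, $\omega(t)=\sin(x_1^2)$. Navier--Stokes reduces to the one-dimensional heat equation:
\[
\omega^\nu(t,x)=(4\pi\nu t)^{-1/2}\int_{\Ro}e^{-(x_1-s)^2/(4\nu t)}\sin(s^2)\,ds .
\]
A Gaussian computation gives
\[
|\omega^\nu(t,x)|\le(1+16\nu^2t^2)^{-1/4}\exp\Big(-\frac{4\nu t\,x_1^2}{1+16\nu^2t^2}\Big).
\]
Let the centre of the unit ball go to infinity along the $x_1$-axis. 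This gives $\|\omega^\nu(t)-\omega(t)\|_{L^2_{\Uloc}}\ge\sqrt{\pi/2}$ for every $\nu>0$ and $t>0$. Meanwhile $\|u^\nu(t)-u(t)\|_{L^\infty}\le C\sqrt{\nu t}$, as Theorem \ref{thm:convergence} predicts. Viscosity erases arbitrarily fine scales located arbitrarily far away, and the $L^2_{\Uloc}$ norm sees all of them uniformly.

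\textbf{The paper's proof has the same defect.} It asserts $\sup_y\|\varphi_{R,y}(P_n\omega^\nu_0-\omega^\nu_0)\|_{L^2}\to0$ as $n\to\infty$ (for $E_1$, and likewise for $E_2$). It also uses $\|\nabla P_n\omega_0\|_{L^2}<\infty$ (for $E_3$--$E_5$). Both fail for the example above.

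\textbf{What a correct version would need.} It needs an extra hypothesis, for example convergence only in $L^2(B(0,\rho))$ for each fixed $\rho$, or data approximable by smooth functions in $L^2_{\Uloc}$ uniformly in $\nu$. Even then, Step 3 should be replaced by an argument on the flow maps, or on a linear transport(-diffusion) equation with a common velocity, as the paper does for $E_1$. The vorticity-level energy estimate does not close.
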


\textbf{Notation.} We will use the following notations.
Let $L^p$ denote the Lebesgue space.
$C_c^\infty$ stands for the space of smooth functions with compact support.
We denote by $\mathcal{S}'_h$ the space of homogeneous tempered distributions.
The symbol $\mathbb{N}^*$ stands for $\mathbb{N} \cup \{-1\}$.
We write $\mathcal{P}$ for the Leray projection operator.
And $G_t(x)$ denotes the two-dimensional heat kernel
corresponding to the operator $ \Delta$, given explicitly by
\[
G_t(x) = \frac{1}{4\pi t} \exp\left(\frac{-|x|^2}{4 t}\right)
\]
for $t > 0$ and $x \in \mathbb{R}^2$.

The paper is organized as follows.
In the next section,
we recall some basic analysis tools and useful properties, 
and introduce several lemmas that will
be essential for our main results.
In Section \ref{sec:est}, 
we establish the estimates for the equations
with the vector field belonging to $L^\infty$.
Section \ref{sec:main} is dedicated to the proof of the main results.

\section{Preliminaries}\label{sec:pre}

\subsection{Littlewood-Paley Theory}
In this subsection, we review the Littlewood-Paley decomposition 
and Bony's paraproduct calculus described in \cite{ref2,ref7,ref12}.
The spaces discussed in this paper are all $\Ro^2$ unless otherwise specified.

\begin{definition}[Inhomogeneous Decomposition]\label{def:inhomogeneous_L-P}
  Let $\chi \in C_c^\infty$ and 
  $\psi  \in C_c^\infty(\Ro^2\setminus\{0\})$ are smooth radial functions that satisfy:
  \begin{itemize}
    \item $\supp \chi \subset B(0,\frac{4}{3})$, $\supp
    \psi \subset \{\xi : \frac{3}{4} \leq |\xi| \leq \frac{8}{3}\}$.
    \item $\chi(\xi) + \sum_{j \geq 0} \psi(2^{-j}\xi) = 1$.
  \end{itemize}

  The corresponding low-frequency cut-off operators $P_{<j}$ 
  and dyadic blocks $P_j$ satisfy:
  \[
    P_{<j} f= C \check{\chi}(2^{j} x)*f,\quad
    P_j f = C 2^{2j} \check{\psi} (2^{j} x)*f, \quad j \geq 0.
  \]
\end{definition}

So we have $f= \sum_{j \geq -1} P_j f$ in $\mathcal{S}'$.
And by the quasi-orthogonality, it is clear that:
\begin{proposition}\label{prop:lp-orthogonality}
  The inhomogeneous Littlewood-Paley decomposition satisfies the following properties:
  \begin{enumerate}
    \item[(i)] For any $j, k \geq 0$ with $|j - k| \geq 2$, we have
    \begin{align*}
      P_j P_k f = 0, \quad \forall f \in \mathcal{S}'.
    \end{align*}
    
    \item[(ii)] For any $j, k \geq 0$ with $|j - k| \geq 5$, we have
    \begin{align*}
      P_j (P_{<k-1} f P_k g) = 0, \quad \forall f,g \in \mathcal{S}'.
    \end{align*}
  \end{enumerate}
\end{proposition}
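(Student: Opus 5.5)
The plan is to argue entirely on the Fourier side, using only the support properties in Definition \ref{def:inhomogeneous_L-P}. I read $P_j$ as the Fourier multiplier with symbol $\psi(2^{-j}\xi)$ and $P_{<j}$ as the multiplier with symbol $\chi(2^{-j}\xi)$; the constants in the convolution formulas are normalisations consistent with this. Hence $\widehat{P_j f}=\psi(2^{-j}\cdot)\hat f$ is supported in the annulus
\[
\mathcal{C}_j=\{\tfrac34 2^j\le|\xi|\le\tfrac83 2^j\}.
\]
Since $\chi$ has compact support inside the open ball $B(0,\tfrac43)$, there is some $r<\tfrac43$ with $\supp\chi\subset \overline{B(0,r)}$. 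Therefore $\widehat{P_{<k-1}f}$ is supported in $\overline{B(0,r2^{k-1})}$. Multiplying $\hat f\in\mathcal S'$ by the smooth compactly supported symbols is legitimate, so all these operators are well defined on $\mathcal S'$.

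For (i), take without loss of generality $k\ge j+2$. The inner radius of $\mathcal C_k$ is $\tfrac34 2^{k}\ge 3\cdot 2^j$, which is strictly larger than the outer radius $\tfrac83 2^j$ of $\mathcal C_j$. So the symbols $\psi(2^{-j}\xi)$ and $\psi(2^{-k}\xi)$ have disjoint supports and their product vanishes identically. It follows that $\widehat{P_jP_kf}=\psi(2^{-j}\cdot)\psi(2^{-k}\cdot)\hat f=0$, hence $P_jP_kf=0$.

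For (ii), first observe that $P_{<k-1}f$ and $P_kg$ have compactly supported Fourier transforms. By Paley--Wiener--Schwartz they are therefore smooth, slowly growing functions, so their product is a tempered distribution. Its Fourier transform is the convolution of two compactly supported distributions, and so it is supported in the Minkowski sum of the supports:
\[
\supp\mathcal F\big(P_{<k-1}f\,P_kg\big)\subset \overline{B(0,r2^{k-1})}+\mathcal C_k\subset\Big\{\big(\tfrac34-\tfrac r2\big)2^k\le|\xi|\le\big(\tfrac83+\tfrac r2\big)2^k\Big\}.
\]
Since $r<\tfrac43$, this annulus lies strictly inside $\{\tfrac1{12}2^k<|\xi|<\tfrac{10}{3}2^k\}$. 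Now compare with $\mathcal C_j$ in two cases.
\begin{itemize}
\item If $j\ge k+5$, the inner radius of $\mathcal C_j$ is $\tfrac34 2^j\ge 24\cdot 2^k>\tfrac{10}3 2^k$.
\item If $j\le k-5$, the outer radius of $\mathcal C_j$ is $\tfrac83 2^j\le \tfrac1{12}2^k$.
\end{itemize}
In both cases the symbol $\psi(2^{-j}\cdot)$ vanishes on the Fourier support of the product, so $P_j(P_{<k-1}f\,P_kg)=0$.

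The only delicate point is the borderline case $j=k-5$. There the crude radii $\tfrac43$ and $\tfrac83$ give exactly touching spheres at $|\xi|=\tfrac1{12}2^k$, and disjointness is not automatic. This is why I keep the strict inequality $r<\tfrac43$, which comes from compactness of $\supp\chi$ inside the open ball. Alternatively one can use that $\psi$ vanishes on the boundary of its annulus; either way the supports are disjoint. The rest is routine bookkeeping of dyadic radii.
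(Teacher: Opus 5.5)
Your proof is correct and is exactly the Fourier-support (quasi-orthogonality) argument the paper invokes without writing it out; in particular, your use of $\supp\chi\subset\overline{B(0,r)}$ with $r<\tfrac43$ to separate the supports in the borderline case $j=k-5$ is right. Drop the ``alternatively'' aside: $\psi$ vanishing on the boundary sphere does not remove that sphere from its support, and a smooth function vanishing on $\supp T$ need not annihilate $T$ (e.g.\ $x\,\delta'=-\delta$), so it is the strict inequality $r<\tfrac43$ alone that makes the supports disjoint.
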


Now we define the homogeneous Littlewood-Paley decomposition:
\begin{definition}[Homogeneous Decomposition] \label{def:homogeneous_L-P}
Let $\psi \in C_c^\infty(\Ro^2 \setminus\{0\})$ be a smooth radial function that satisfies:
  \begin{itemize}
    \item $\supp \psi \subset \{\xi : \frac{3}{4} \leq |\xi| \leq \frac{8}{3}\}$
    \item $\sum_{j\in\mathbb{Z}} \psi(2^{-j}\xi) = 1,\quad \forall \xi\in\Ro^2\setminus\{0\}$
  \end{itemize}

The corresponding homogeneous dyadic blocks $\dot{P}_j$ satisfy:
\[
  \dot{P}_j f = C 2^{2j} \check{\psi}(2^{j} x)*f, \quad j\in\mathbb{Z}.
\]

And the homogeneous low-frequency cut-off operator $\dot{P}_j$ is defined by:
\[
  \dot{P}_j f = \sum_{k<j} \dot{P}_k f.
\]
\end{definition}

We often use the notation $P_{\geq n} f$ to denote the other part of $f$:
\[
  P_{\geq n} f \defeq (I - P_{<n}) f.
\]

\begin{remark} \label{rem:homogeneous-lp-properties}
  \begin{enumerate}
    \item[(i)] For any $u \in \mathcal{S}'$, 
    $u \in \mathcal{S}'_h$ if and only if:
    \begin{align*}
      \lim_{n \to -\infty} \dot{P}_{<n} u = 0,\quad \in \mathcal{S}'.
    \end{align*}
    
    \item[(ii)] In the sense of distributions, 
    for any $u \in \mathcal{S}'_h$, we have:
    \begin{align*}
      \sum_{k<j} \dot{P}_k u = \dot{P}_{<j} u.
    \end{align*}
  \end{enumerate}
\end{remark}

Then, we review Bernstein inequalities, which are useful tools for the proof.
\begin{lemma}[Bernstein's Inequalities] \label{lem:bernstein}
  Let $f \in \mathcal{S}'_h$ be a homogeneous tempered 
  distribution. Then the following inequalities hold for the 
  homogeneous dyadic blocks $\dot{P}_j$ and the homogeneous 
  low-frequency cut-off operator $\dot{P}_j$:
  
  \begin{enumerate}
    \item[(i)] Basic Bernstein inequality: For any $1 \leq p \leq q \leq \infty$ and any $j \in \mathbb{Z}$, there exists a constant $C > 0$ such that
    \begin{align*}
      \|\dot{P}_j f\|_{L^q} \leq 
      C 2^{2 j (\frac{1}{p} - \frac{1}{q})} \|\dot{P}_j f\|_{L^p},
    \end{align*}
    
    \item[(ii)] Derivative Bernstein inequality: For any $1 \leq p \leq \infty$, any $j \in \mathbb{Z}$, and any multi-index $\alpha \in \mathbb{N}_0^d$, there exists a constant $C_\alpha > 0$ such that
    \begin{align*}
      \|\partial^\alpha \dot{P}_j f\|_{L^p} &\leq C_\alpha 2^{j |\alpha|} \|\dot{P}_j f\|_{L^p}, \\
      \|\partial^\alpha \dot{P}_{<j} f\|_{L^p} &\leq C_\alpha 2^{j |\alpha|} \|\dot{P}_{<j} f\|_{L^p}.
    \end{align*}
    
    \item[(iii)] Spectral localization: For any $1 \leq p \leq \infty$ and any $j \in \mathbb{Z}$, there exists a constant $C > 0$ such that
    \begin{align*}
      \|\dot{P}_j f\|_{L^p} &\approx 2^{-j} \|\nabla \dot{P}_j f\|_{L^p}.
    \end{align*}
  \end{enumerate}
\end{lemma}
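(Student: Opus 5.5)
The proof rests on the Fourier-side description of the blocks. The symbols of $\dot{P}_j$ and of the low-frequency cut-off are supported in the ball $B(0,\frac{8}{3}2^{j})$, and $\dot{P}_j$ is supported in the annulus $\{\frac34 2^j\le|\xi|\le\frac83 2^j\}$. I will write every operator in the statement as convolution with a rescaled Schwartz kernel. Scaling then reduces each estimate to the case $j=0$, and Young's inequality does the rest. To make this rescaling work, I fix auxiliary radial functions $\tilde\chi\in C_c^\infty$ with $\tilde\chi\equiv1$ on $B(0,\frac83)$ and $\tilde\psi\in C_c^\infty(\Ro^2\setminus\{0\})$ with $\tilde\psi\equiv1$ on the annulus $\{\frac34\le|\xi|\le\frac83\}$. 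Then $\dot{P}_j f=\tilde\chi(2^{-j}D)\dot{P}_j f$ and $\dot{P}_j f=\tilde\psi(2^{-j}D)\dot{P}_j f$.

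\textbf{Step 1: part (i).} With $h=\mathcal{F}^{-1}\tilde\chi\in\mathcal{S}$, the identity above gives
\[
\dot{P}_j f=2^{2j}h(2^j\cdot)*\dot{P}_j f .
\]
Young's inequality with $1+\frac1q=\frac1r+\frac1p$ then yields
\[
\|\dot{P}_j f\|_{L^q}\le \|2^{2j}h(2^j\cdot)\|_{L^r}\|\dot{P}_j f\|_{L^p}.
\]
The $L^r$ norm of the rescaled kernel equals $2^{2j(1-\frac1r)}\|h\|_{L^r}=2^{2j(\frac1p-\frac1q)}\|h\|_{L^r}$. This gives (i) with $C=\|h\|_{L^r}$, which can be bounded uniformly over $r\in[1,\infty]$.

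\textbf{Step 2: part (ii).} Differentiating the same identity puts the derivative on the kernel:
\[
\partial^\alpha \dot{P}_j f=2^{j|\alpha|}\,2^{2j}(\partial^\alpha h)(2^j\cdot)*\dot{P}_j f .
\]
Young's inequality in $L^1$ then gives the bound with $C_\alpha=\|\partial^\alpha h\|_{L^1}$. The low-frequency statement is identical, since $\dot{P}_{<j}f$ also has spectrum in $B(0,\frac83 2^j)$ and is therefore also reproduced by $\tilde\chi(2^{-j}D)$. This is where the hypothesis $f\in\mathcal{S}'_h$ enters. By Remark \ref{rem:homogeneous-lp-properties}, $\dot{P}_{<j}f$ is a well-defined tempered distribution with this support; I take the right-hand side finite, and otherwise there is nothing to prove.

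\textbf{Step 3: part (iii).} One direction is (ii) with $|\alpha|=1$. For the reverse inequality $\|\dot{P}_j f\|_{L^p}\lesssim 2^{-j}\|\nabla\dot{P}_j f\|_{L^p}$, I use that $\tilde\psi$ vanishes near the origin, so $\xi_k\tilde\psi(\xi)/|\xi|^2$ is smooth and compactly supported. Writing
\[
\dot{P}_j f=-\sum_k \frac{i\xi_k}{|\xi|^2}\tilde\psi(2^{-j}\xi)\,\widehat{\partial_k\dot{P}_j f}
\]
on the Fourier side, each multiplier equals $2^{-j}m_k(2^{-j}\xi)$ with $m_k\in C_c^\infty$. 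Its kernel $2^{-j}2^{2j}\check m_k(2^j\cdot)$ has $L^1$ norm $2^{-j}\|\check m_k\|_{L^1}$, and Young's inequality closes the estimate.

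The only delicate point I expect is Step 3: this is where the annular support, not merely a ball, is essential. I must also check that the identity holds as tempered distributions, and not only for Schwartz functions. I would justify it by noting that each $\dot{P}_j f$ is smooth with polynomially bounded derivatives, so convolution with a Schwartz kernel is legitimate. The constants are uniform in $j$ because of the scaling invariance of the kernels' $L^r$ norms.
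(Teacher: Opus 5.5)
Your proof is correct. The paper gives no proof of this lemma; it simply recalls it as a standard Littlewood--Paley tool from the references it cites. Your argument is the standard textbook proof behind it: reproduce each block by the rescaled Schwartz kernel $2^{2j}h(2^j\cdot)$, reduce to $j=0$ by scaling, apply Young's inequality, and invert $\nabla$ with the annular multiplier $\tilde\psi$ for the reverse bound in (iii). You also correctly read the mislabelled low-frequency operator in the statement as $\dot{P}_{<j}$.
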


And we also have some functional spaces from Littlewood-Paley decomposition.
\begin{definition}[Homogeneous Besov Spaces] \label{def:homogeneous-besov}
  Let $s \in \Ro$ and $1 \leq p, q \leq \infty$. The homogeneous Besov space $\dot{B}^s_{p,q}$ is defined as the space of all homogeneous tempered distributions $f \in \mathcal{S}'_h$ such that
  \begin{align*}
    \|f\|_{\dot{B}^s_{p,q}} \defeq 
    \| 2^{js} \|\dot{P}_j f\|_{L^p} \|_{\ell^q} < \infty,
  \end{align*}
  
  The space $\dot{B}^s_{p,q}$ is a Banach space when equipped with the above norm.
\end{definition}

\begin{definition}[Inhomogeneous Besov Spaces] \label{def:inhomogeneous-besov}
  Let $s \in \Ro$ and $1 \leq p, q \leq \infty$. The inhomogeneous Besov space $B^s_{p,q}$ is defined as the space of all tempered distributions $f \in \mathcal{S}'$ such that
  \begin{align*}
    \|f\|_{B^s_{p,q}} \defeq 
    \| 2^{js} \|P_j f\|_{L^p} \|_{\ell^q(\mathbb{N}^*)} < \infty,
  \end{align*}
  where $\Delta_j$ are the inhomogeneous dyadic blocks for $j \geq 0$,
  and $\Delta_{-1} = P_0$ is the low-frequency cut-off operator.
\end{definition}

\begin{definition}[Homogeneous Triebel-Lizorkin Spaces] \label{def:homogeneous-triebel}
  Let $s \in \Ro$, $1 \leq p < \infty$, and $1 \leq q \leq \infty$.
  The homogeneous Triebel-Lizorkin space $\dot{F}^s_{p,q}$ is defined as the space of all homogeneous tempered distributions $f \in \mathcal{S}'_h$ such that
  \begin{align*}
    \|f\|_{\dot{F}^s_{p,q}} \defeq 
    \left\|  \| 2^{js} |\dot{P}_j f| \|_{\ell^q}  \right\|_{L^p} < \infty,
  \end{align*}
\end{definition}

According to our initial data, the BMO space is important for the main theorem.
\begin{definition}[BMO Space] \label{def:bmo-space}
  The space of functions of bounded mean oscillation \\
  $\mathrm{BMO}$ is defined as the 
  space of all locally integrable functions 
  $f \in L^1_{\mathrm{loc}}$ such that
  \begin{align*}
    \|f\|_{\mathrm{BMO}} \defeq \sup_{B} \frac{1}{|B|} \int_B |f(x) - f_B| \, dx < \infty,
  \end{align*}
  where the supremum is taken over all balls $B \subset \Ro^2$ and $f_B = \frac{1}{|B|} \int_B f(y) \, dy$.
\end{definition}
Since $\| \cdot \|_{\mathrm{BMO}}$ is a seminorm, 
we treat functions differing by a constant as the same equivalence class.

Finally, we introduce Bony decomposition,
which is critical for the product term in the equation.
\begin{definition}[Bony Decomposition] \label{def:bony-decomposition}
  Let $f, g \in \mathcal{S}'_h$. 
  The product $fg$ can be decomposed into three parts:
  \begin{align*}
    fg = \dot{T}_f g + \dot{T}_g f + \dot{R}(f,g),
  \end{align*}
  where the paraproduct terms $T_f g$, $T_g f$ and the remainder term $R(f,g)$ are defined as follows:
  
  \begin{enumerate}
    \item[(i)] Homogeneous paraproduct terms:
    \begin{align*}
      \dot{T}_f g & \defeq \sum_{j\in\mathbb{Z}} \dot{P}_{<j-1} f \, \dot{P}_j g = \sum_{j\in\mathbb{Z}} \sum_{k<j-1} \dot{P}_k f \, \dot{P}_j g,
    \end{align*}
    
    \item[(ii)] Homogeneous remainder term:
    \begin{align*}
      \dot{R}(f,g) \defeq \sum_{j\in\mathbb{Z}} \sum_{|j-k|\leq 1} \dot{P}_j f \, \dot{P}_k g
      = \sum_{j\in\mathbb{Z}} \dot{P}_j f \, \widetilde{\dot{P}}_j g,
    \end{align*}
  \end{enumerate}
  where $\widetilde{\dot{P}}_j \defeq \sum_{k=-1}^{1} \dot{P}_{j-k}$.

  Here $\dot{P}_{<j}$ denotes the homogeneous low-frequency cut-off operator
  and $\dot{P}_j$ denotes the homogeneous dyadic block operator from the homogeneous Littlewood-Paley decomposition (see Definition \ref{def:homogeneous_L-P}).
\end{definition}

We omit the inhomogeneous decomposition,
which has a similar structure.

In the next subsection, we also gather some critical properties about BMO 
and homogeneous Triebel–Lizorkin spaces.

\subsection{Some Useful Properties and Lemmas}
In this subsection, we collect some useful properties and lemmas 
about uniformly locally $L^2$ spaces and other functional spaces.

We employ a smooth cut-off function $\varphi_{R,y}(x)$ to simplify the expression:
\begin{definition}\label{def:cut-off}
  A function $\varphi: \Ro^2 \to [0, 1]$ is called a smooth cut-off function if it satisfies:
  \[
  \varphi(x) = 
  \begin{cases}
  1, & \text{for } x \in B(0, 1), \\
  \text{smooth transition}, & \text{for } x \in B(0, 2) \setminus B(0, 1), \\
  0, & \text{for } x \in \Ro^2 \setminus B(0, 2),
  \end{cases}
  \]
  where $B(0, r)$ denotes the open ball of radius $r$ centered at the origin, and $\varphi \in C_0^\infty$.
\end{definition}

\begin{proposition}\label{prop:cut-off}
  Let $\varphi_{R,y}(x) = \varphi(\frac{x-y}{R})$ where $\varphi$ is the smooth cut-off function defined in Definition \ref{def:cut-off}. Then:
  \begin{enumerate}
    \item[(i)] The uniformly locally $L^2$ norm admits the following equivalent form for any $R > 0$:
    \begin{align*}
      \|f\|_{L^2_{\Uloc}} \simeq \sup_{y} \|\varphi_{R,y} f\|_{L^2}.
    \end{align*}

    \item[(ii)] There exists a constant $C > 0$ such that for any $f \in L^p (p \geq 2)$,
    \begin{align*}
      \|f\|_{L^2_{\Uloc}} \leq C\|f\|_{L^p}.
    \end{align*}
  \end{enumerate}
\end{proposition}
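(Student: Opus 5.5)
For part (i), the plan is to compare balls of radius 1 with the cut-off supports by covering arguments, using translation invariance (the supremum over $y$) throughout. Since $0\le\varphi\le 1$, $\varphi_{R,y}=1$ on $B(y,R)$, and $\supp\varphi_{R,y}\subset B(y,2R)$, we have the pointwise bounds
\begin{align*}
  \mathbf{1}_{B(y,R)}|f|\le \varphi_{R,y}|f|\le \mathbf{1}_{B(y,2R)}|f|.
\end{align*}
It therefore suffices to show that for every fixed $r>0$,
\begin{align*}
  \sup_y\|f\|_{L^2(B(y,r))}\simeq \|f\|_{L^2_{\Uloc}},
\end{align*}
with constants depending only on $r$ (and hence on $R$).

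For the upper bound, cover $B(y,r)$ by $N(r)\lesssim (1+r)^2$ unit balls $B(y_i,1)$. This gives $\|f\|^2_{L^2(B(y,r))}\le N(r)\|f\|^2_{L^2_{\Uloc}}$, so $\sup_y\|f\|_{L^2(B(y,r))}\le C(r)\|f\|_{L^2_{\Uloc}}$. For the lower bound, the symmetric argument applies: cover $B(y,1)$ by $N'(r)\lesssim(1+r^{-1})^2$ balls of radius $r$. Applying these with $r=R$ and $r=2R$ and combining with the pointwise bounds yields $\|f\|_{L^2_{\Uloc}}\simeq\sup_y\|\varphi_{R,y}f\|_{L^2}$. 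The constants depend on $R$ but not on $f$, which is all the statement asserts. The only step requiring any care is the uniform covering count, and it is standard: use a lattice of centers $r\mathbb{Z}^2/2$.

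For part (ii), apply Hölder on a unit ball with exponents $p/2$ and its conjugate:
\begin{align*}
  \int_{B(y,1)}|f|^2\,dx\le |B(y,1)|^{1-2/p}\,\|f\|_{L^p(B(y,1))}^2\le \pi^{1-2/p}\|f\|_{L^p}^2 .
\end{align*}
Take square roots and the supremum over $y$; the constant $C=\pi^{1/2-1/p}\le \max(1,\sqrt\pi)$ is uniform in $p\in[2,\infty]$, and the case $p=\infty$ is included directly. There is no real obstacle here. The only point worth flagging is that the equivalence constants in (i) depend on $R$, and this dependence should be recorded for later use.
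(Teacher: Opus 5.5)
The paper does not prove this proposition. It states it as a standard fact about uniformly local spaces and points to the literature, so your argument supplies a proof the paper omits rather than paralleling one.

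Your proof is correct. The sandwich $\mathbf{1}_{B(y,R)}|f|\le\varphi_{R,y}|f|\le\mathbf{1}_{B(y,2R)}|f|$ reduces (i) to comparing suprema of local $L^2$ norms over balls of two different radii, and the lattice covering handles that. Part (ii) is H\"older on a unit ball, with a constant $\pi^{1/2-1/p}\le\sqrt{\pi}$ that is uniform in $p\in[2,\infty]$.

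Your version also makes explicit the dependence of the constants, which the paper leaves implicit. By scaling, comparing balls of radii $r_1\le r_2$ costs a covering number of order $(1+r_2/r_1)^2$, which depends only on the ratio $r_2/r_1$. This shows two things:
\begin{itemize}
\item The $R$-dependence in (i) is genuine. For $f\equiv 1$, $\|f\|_{L^2_{\Uloc}}=\sqrt{\pi}$, while $\sup_y\|\varphi_{R,y}f\|_{L^2}=R\|\varphi\|_{L^2}$ is of order $R$.
\item The later comparison $\|\varphi_{2R,y}f\|_{L^2}\le C\sup_y\|\varphi_{R,y}f\|_{L^2}$ in Remark \ref{rem:phi_R_2R_relation} does hold with $C$ independent of $R$. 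There only the ratio $4R/R$ of the radii enters, since $B(y,4R)$ is covered by a fixed number of balls $B(z_i,R)$ on which $\varphi_{R,z_i}=1$.
\end{itemize}
So your remark that the $R$-dependence should be recorded is exactly the right caveat.
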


This space serves, in a certain sense,
as an alternative to the $L^\infty$ space.
More results and propositions of uniformly
locally $L^2$ solutions can be found in \cite{ref5}.

For the $L^\infty$-convergence of the velocity field,
we also need the sharp embedding inequality:
\begin{proposition}\label{prop:inequality_uloc_l2}
  Let $u$ be a solution to \eqref{eq:n-s} or \eqref{eq:euler} and 
  $\omega$ to \eqref{eq:n-s_o} or \eqref{eq:euler_o} with 
  initial data $u_0 \in L^\infty$ and
  $\omega_0 \in L^\infty$.
  Then, for any $t > 0$, the solution satisfies:
  \begin{align*}
    \| u \|_{L^\infty} \leq \| u \|^{\frac{1}{2}}_{L^2_{\Uloc}} 
    \| \omega \|^{\frac{1}{2}}_{B^0_{\infty,\infty}}.
  \end{align*}
\end{proposition}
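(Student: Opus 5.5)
The plan is to split $u$ into low and high frequencies at a free dyadic level $N\ge 0$, bound each part by one of the two norms on the right, and then optimize over $N$. Write
\[
u = P_{<N} u + \sum_{j\ge N} P_j u .
\]
The low-frequency part will be controlled by $\|u\|_{L^2_{\Uloc}}$ through a localized Bernstein inequality. The high-frequency part will be controlled by $\|\omega\|_{B^0_{\infty,\infty}}$ through the Biot--Savart law, which on each dyadic block gains one derivative.

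\emph{Step 1 (low frequencies).} Write $P_{<N}u = K_N * u$ with $K_N(x)=C2^{2N}\check\chi(2^N x)$. Tile $\Ro^2$ by unit cubes $Q$ and apply Cauchy--Schwarz on each cube:
\[
|P_{<N}u(x)| \le \sum_Q \|K_N(x-\cdot)\|_{L^2(Q)}\,\|u\|_{L^2(Q)}
\le C\|u\|_{L^2_{\Uloc}} \sum_Q \|K_N(x-\cdot)\|_{L^2(Q)} .
\]
Since $\check\chi$ is Schwartz, the sum over cubes is bounded by $C2^{N}$: it is $O(2^N)$ from the $O(1)$ cubes near $x$, because $\|K_N\|_{L^2}\sim 2^N$, and rapidly decaying for the far cubes. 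Proposition \ref{prop:cut-off}(i) lets us pass freely between balls and cubes. This gives
\[
\|P_{<N}u\|_{L^\infty}\le C2^N\|u\|_{L^2_{\Uloc}} .
\]

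\emph{Step 2 (high frequencies).} For $j\ge 0$ the block $P_j$ is localized in an annulus. There $u=\nabla^\perp\Delta^{-1}\omega$ is given by a Fourier multiplier homogeneous of degree $-1$, so a smooth annular multiplier argument (the same one behind Lemma \ref{lem:bernstein}(iii)) yields
\[
\|P_j u\|_{L^\infty}\le C2^{-j}\|P_j\omega\|_{L^\infty}\le C2^{-j}\|\omega\|_{B^0_{\infty,\infty}} .
\]
This step needs $u=K*\omega$ modulo constants. I will justify this for bounded divergence-free $u$ with bounded $\omega$, which is the Serfati setting, and constants only affect the $P_{<N}$ part anyway. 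Summing the geometric series over $j\ge N$ gives $C2^{-N}\|\omega\|_{B^0_{\infty,\infty}}$.

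\emph{Step 3 (optimization).} Combining the two steps,
\[
\|u\|_{L^\infty}\le C\bigl(2^N\|u\|_{L^2_{\Uloc}}+2^{-N}\|\omega\|_{B^0_{\infty,\infty}}\bigr).
\]
Choose $N\ge0$ with $2^{2N}\approx\|\omega\|_{B^0_{\infty,\infty}}/\|u\|_{L^2_{\Uloc}}$. This yields $C\|u\|^{1/2}_{L^2_{\Uloc}}\|\omega\|^{1/2}_{B^0_{\infty,\infty}}$ whenever that ratio is at least $1$.

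\emph{Main obstacle.} The hard point is the regime $\|\omega\|_{B^0_{\infty,\infty}}<\|u\|_{L^2_{\Uloc}}$, where $N$ cannot be taken negative because the decomposition is inhomogeneous. Constant flows $u\equiv c$, $\omega=0$ show that the pure product form cannot hold there. So I expect the statement must be read with a constant $C$, and in the form
\[
\|u\|_{L^\infty}\le C\Bigl(\|u\|_{L^2_{\Uloc}}+\|u\|^{1/2}_{L^2_{\Uloc}}\|\omega\|^{1/2}_{B^0_{\infty,\infty}}\Bigr).
\]
In that regime the choice $N=0$ gives the additive term directly. This corrected version is what I would prove and then use in the convergence argument, where the additive term is harmless. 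If the authors intend a homogeneous norm for $\omega$, I would instead allow $N<0$ using $\dot P_j$. That route, however, requires controlling $\dot P_{<N}u$ by the $L^2_{\Uloc}$ norm at scales larger than the unit ball, which costs a factor $2^{-N}$ rather than gaining one, and I do not expect it to work.
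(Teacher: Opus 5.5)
Your proposal is correct and follows essentially the paper's route: split at a dyadic level $N$, bound $\|P_{<N}u\|_{L^\infty}\le C2^N\|u\|_{L^2_{\Uloc}}$ (you justify this by tiling with unit cubes, where the paper only asserts it), bound each high block by $C2^{-q}\|P_q\omega\|_{L^\infty}$, then sum and optimize in $N$. Your caveat is also right: the paper's proof ends with a constant $C'$ and sets $2^N=C\|u\|^{-1/2}_{L^2_{\Uloc}}\|\omega\|^{1/2}_{B^0_{\infty,\infty}}$ without checking $N\ge 0$, and constant flows ($u\equiv c$, $\omega=0$) show the statement as written is false, so the additive form $\|u\|_{L^\infty}\le C\bigl(\|u\|_{L^2_{\Uloc}}+\|u\|^{1/2}_{L^2_{\Uloc}}\|\omega\|^{1/2}_{B^0_{\infty,\infty}}\bigr)$ you prove is the correct version, and it still suffices for its later use in Theorem \ref{thm:convergence}.
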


\begin{proof}[Proof of Proposition \ref{prop:inequality_uloc_l2}]
Applying the Littlewood-Paley theory, we have:
\begin{align*}
  \| u \|_{L^\infty} \leq \| P_{\geq n}  u \|_{L^\infty} + \|P_{<n}  u \|_{L^\infty},
\end{align*}
where $N>1$ to be fixed later.

For the high-frequency part, we control its norm through the vorticity.
So for $P_{\geq N}  u$, it implies for each $N \geq 0$ that:
\begin{align*}
  \| P_{\geq N} u \|_{L^\infty} &\leq \sum_{q \geq N} \| P_q  u \|_{L^\infty} \\
  &\leq \sum_{q \geq N} C 2^{-q}\| P_q \nabla  u \|_{L^\infty} \\
  &\leq \sum_{q \geq N} C 2^{-q}\| P_q \ \omega \|_{L^\infty} \\
  &\leq \sup_{q \geq -1} \| P_q \ \omega \|_{L^\infty} \sum_{q \geq N} C 2^{-q} \\
  &\leq C 2^{-N} \| \omega \|_{B^0_{\infty,\infty}}.
\end{align*}

According to the proposition of operator $P_{<n}$ 
and $L^2_{\Uloc}$ space, we have:
\begin{align*}
  \|P_{<N} u \|_{L^\infty} \leq C 2^N\| u \|_{L^2_{\Uloc}}.
\end{align*}

By exploiting the arbitrariness of the coefficient $N$ and let 
$2^N = C \| u \|^{-\frac{1}{2}}_{L^2_{\Uloc}} \| \omega \|^{\frac{1}{2}}_{B^0_{\infty,\infty}}$, we conclude that:
\begin{align*}
  \|  u \|_{L^\infty} \leq C' \|  u \|^{\frac{1}{2}}_{L^2_{\Uloc}} 
  \| \omega \|^{\frac{1}{2}}_{B^0_{\infty,\infty}}.
\end{align*}
\end{proof}

And we directly have the following inequality:
\begin{align*}
  \| u \|_{L^\infty} \leq C \| u \|^{\frac{1}{2}}_{L^2_{\Uloc}} 
  \| \omega \|^{\frac{1}{2}}_{L^\infty}.
\end{align*}

The convolution in $L^2_{\Uloc}$ also have similar conclusion(see \cite{ref5}):
\begin{proposition}\label{prop:uloc_convolution}
  For any $f \in L^1$ and $g \in L^2_{\Uloc}$,
  their convolution $f * g$ belongs to $L^2_{\Uloc}$,
  and there exists a universal constant $C > 0$ such that
  \begin{align*}
    \| f * g \|_{L^2_{\Uloc}} \leq C \| f \|_{L^1} \| g \|_{L^2_{\Uloc}}.
  \end{align*}
\end{proposition}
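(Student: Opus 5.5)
We need to prove Proposition \ref{prop:uloc_convolution}: $\|f*g\|_{L^2_{\Uloc}} \le C\|f\|_{L^1}\|g\|_{L^2_{\Uloc}}$.

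The plan is to combine Minkowski's integral inequality with translation invariance of the $L^2_{\Uloc}$ norm. First I would reduce to $f \in C_c^\infty$, or at least to $f\in L^1$ with $g$ locally square integrable, so that $f*g$ is well defined. Indeed, $g\in L^2_{\Uloc}$ implies $g\in L^1_{\Uloc}$ with $\sup_y\int_{B(y,1)}|g|\le \sqrt{\pi}\,\|g\|_{L^2_{\Uloc}}$. Combining this with $f\in L^1$ and Fubini, the convolution $f*g(x)=\int f(z)g(x-z)\,dz$ converges absolutely for a.e.\ $x$ in every ball. Next, fix $y\in\Ro^2$. I would write
\begin{align*}
\Big(\int_{B(y,1)}|f*g(x)|^2\,dx\Big)^{1/2}
\le \int_{\Ro^2}|f(z)|\Big(\int_{B(y,1)}|g(x-z)|^2\,dx\Big)^{1/2}dz,
\end{align*}
which is Minkowski's integral inequality applied to the $L^2(B(y,1))$-valued integral $z\mapsto f(z)\,g(\cdot-z)$. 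The inner factor equals $\big(\int_{B(y-z,1)}|g|^2\big)^{1/2}\le\|g\|_{L^2_{\Uloc}}$, uniformly in $z$. Integrating in $z$ then gives the bound $\|f\|_{L^1}\|g\|_{L^2_{\Uloc}}$, and taking the supremum over $y$ yields the claim with $C=1$ for the ball definition in Definition \ref{def:uloc_l2}. If one prefers the cut-off form, Proposition \ref{prop:cut-off}(i) gives the same statement up to a constant.

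The only point needing care, and the one I expect to be the main obstacle, is justifying measurability and the use of Minkowski/Fubini when $g$ is not globally integrable. I would handle it by applying Tonelli to $|f(z)||g(x-z)|$ on $B(y,1)\times\Ro^2$, which is legitimate since everything is nonnegative. This shows the integral defining $f*g$ converges absolutely a.e.\ and that $|f*g|\le |f|*|g|$. The Minkowski step then only needs to be applied to $|f|*|g|$, where it holds for nonnegative measurable integrands without further assumptions.
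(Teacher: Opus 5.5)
Your argument is correct and complete. The paper does not prove this proposition; it only refers to the literature on uniformly local spaces, so there is no competing route to compare against.

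Your proof is the standard one. You apply Minkowski's integral inequality to the nonnegative integrand $|f(z)|\,|g(x-z)|\,\mathbf{1}_{B(y,1)}(x)$. You then observe that $\|g(\cdot-z)\|_{L^2(B(y,1))}=\|g\|_{L^2(B(y-z,1))}\le\|g\|_{L^2_{\Uloc}}$ uniformly in $z$. This gives the bound with $C=1$ for the ball definition in Definition \ref{def:uloc_l2}. The same Tonelli/Minkowski step on $|f|*|g|$ also shows that $f*g$ is defined almost everywhere, so the preliminary reduction to $f\in C_c^\infty$ you mention is unnecessary.

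Your argument, like the statement itself, genuinely uses integrability of the kernel. Where the paper later invokes this proposition for $p_1=-\Delta^{-1}\nabla\cdot\nabla\cdot(\varphi_{4R,y}\,u\otimes u)$, the kernel is a Calder\'on--Zygmund kernel that is not in $L^1$. That step would instead rest on $L^2$-boundedness of the Riesz transforms applied to the compactly supported function $\varphi_{4R,y}\,u\otimes u$, with an $R$-dependent constant, rather than on the proposition as stated.
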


In view of the norm structure, it is important in the 
proof of the vorticity that(see \cite{ref8}):
\begin{remark}\label{rem:bmo-fsp-equivalence}
  Let $f \in \mathcal{S}'$ be a tempered distribution. 
  Then the following norm equivalence holds:
  \begin{align*}
    \| f \|_{\mathrm{BMO}} \approx \| f \|_{\dot F^0_{\infty,2}},
  \end{align*}
\end{remark}

And it is also important for the boundedness that(see \cite{ref0} and \cite{ref6}):
\begin{remark}\label{rem:riesz-bmo-properties}
  Recall that the $j$-th Riesz transform $R_j$ on $\Ro^2$ is defined by
  \begin{align*}
    R_j f(x) = C \int_{\Ro^2} \frac{x_j - y_j}{|x-y|^{2}} f(y) \, dy,
    \quad j = 1,2.
  \end{align*}
  where $C$ is a constant.

  And we have $R_j$ maps $\mathrm{BMO}$ to $\mathrm{BMO}$ boundedly.
\end{remark}

\section{A priori estimates}\label{sec:est}
In this section, we present the a priori estimates 
for the vorticity $u^\nu$ and the velocity field $\omega^\nu$.
We mainly use the heat kernel $G_t(x)$ and the Bony decomposition.
Through the Theorem \ref{thm:n-s_control}, 
we can observe that the $L^\infty$ estimates for the N-S equations
are independent of the viscosity coefficient $\nu$.

By applying the maximum principle(see \cite{ref3}), we have:
\begin{align*}
  \| \omega^{\nu}(t) \|_{L^\infty} \leq \| \omega^{\nu}_0 \|_{L^\infty}
  + \| \nabla \times g^\nu \|_{L^1([0,t];L^\infty)}.
\end{align*}

For the velocity field and equation \eqref{eq:n-s}, 
using heat kernel $G_t(x)$ and Leray projection operator $\mathcal{P}$, we can write:
\begin{align}\label{eq:n-s_heat}
  u^\nu(t) = G_{\nu t}(x) * u^\nu_0
  - \int^{t}_{0} G_{\nu(t-s)}(x) * \mathcal{P} ( u^\nu \cdot \nabla u^\nu ) \,ds
  + \int^{t}_{0} G_{\nu(t-s)}(x) * \mathcal{P} g^\nu \,ds.
\end{align}

According to triangular inequality and Young inequality, we have:
\begin{align*}
  & \| u^\nu \|_{L^\infty}\\
  \leq & \|G_{\nu t}(x) * u^\nu_0 \|_{L^\infty} 
  + \int^t_0 \| G_{\nu(t-s)}(x) * \mathcal{P} ( u^\nu \cdot \nabla u^\nu ) \|_{L^\infty} ds
  + \int^t_0 \| G_{\nu(t-s)}(x) * \mathcal{P} g^\nu \|_{L^\infty} ds\\
  \leq & \| u^\nu_0 \|_{L^\infty}
  + \int^t_0 \| G_{\nu(t-s)}(x)  \|_{L^1} \| \mathcal{P} ( u^\nu \cdot \nabla u^\nu ) \|_{L^\infty} ds
  + \int^t_0 \| G_{\nu(t-s)}(x) \|_{L^1} \| \mathcal{P} g^\nu \|_{L^\infty} ds\\    
  \leq & \| u^\nu_0 \|_{L^\infty} + \int^t_0 \| \mathcal{P} ( u^\nu \cdot \nabla u^\nu ) \|_{L^\infty} ds
  + \int^t_0 \| \mathcal{P} g^\nu \|_{L^\infty} ds.
\end{align*}

Then, we use the pressure's two forms.
On the one hand, by operator $\mathcal{P}$, we have:
\begin{align*}
  \mathcal{P} ( u^\nu \cdot \nabla u^\nu ) = \nabla p^\nu.
\end{align*}

On the other hand(see Section \ref{sec:intro}):
\begin{align*}
  p^\nu = \frac{1}{-\Delta} (\partial_i u^\nu_j \partial_j u^\nu_i).
\end{align*}

So we need to show the control of $\| \mathcal{P} (u^\nu \cdot \nabla u^\nu)\|_{L^\infty}$
or $\| \nabla p^\nu \|_{L^\infty}$.
First, we discuss the low frequencies of $\| \mathcal{P} (u^\nu \cdot \nabla u^\nu) \|_{L^\infty}$
by Littlewood-Paley theory.
Then, we control the high frequencies of $\| \nabla p^\nu \|_{L^\infty}$ and 
$\| \mathcal{P} (u^\nu \cdot \nabla u^\nu)\|_{L^\infty}$ 
by Bony's paraproduct decomposition.

By the boundedness of singular integral operator $\mathcal{P}$:
\begin{align*}
  \sum^{N-1}_{q = - \infty} \|P_q \mathcal{P} (u^\nu \cdot \nabla u^\nu)\|_{L^\infty} 
  & \leq \sum^{N-1}_{q = - \infty} C \|P_q (u^\nu \cdot \nabla u^\nu)\|_{L^\infty} \\
  & \leq \sum^{N-1}_{q = - \infty} C \|P_q \nabla \cdot (u^\nu \otimes u^\nu)\|_{L^\infty} \\
  & \leq \sum^{N-1}_{q = - \infty} C 2^q\| (u^\nu \otimes u^\nu)\|_{L^\infty} \\
  & \leq C 2^N \| u^\nu \|^2_{L^\infty}.
\end{align*}

For pressure $p^\nu$ and $u^\nu \otimes u^\nu$($=(u^\nu_i u^\nu_j)_{2 \times 2}$),
we have the following Bony decomposition:
\begin{align*}
  \nabla p^\nu & = \frac{\partial_k}{-\Delta} (\partial_i u^\nu_j \partial_j u^\nu_i)\\
  & = \frac{\partial_k}{-\Delta} T_{\partial_i u^\nu_j} \partial_j u^\nu_i 
  + \frac{\partial_k}{-\Delta} T_{\partial_j u^\nu_i} \partial_i u^\nu_j
  + \frac{\partial_k}{-\Delta} R(\partial_i u^\nu_j, \partial_j u^\nu_i),
\end{align*}

and
\begin{align*}
  (u^\nu \cdot \nabla u^\nu) & =\partial_i T_{u^\nu_i} u^\nu_j
  + \partial_i T_{u^\nu_j} u^\nu_i + \partial_i R(u^\nu_i, u^\nu_j).
\end{align*}

For the high-frequency case,
we replace the original first term in $u^\nu \cdot \nabla u^\nu$ 
by the first term in $\nabla p^\nu$. 
Thus, it follows that:
\begin{align*}
  & \sum^{\infty}_{q = N} \|P_q \mathcal{P} (u^\nu \cdot \nabla u^\nu)\|_{L^\infty} \\
  \leq & \sum_{q = N}^{+ \infty}  \| P_q \frac{\partial_k}{-\Delta} 
  T_{\partial_i u^\nu_j} \partial_j u^\nu_i\|_{L^\infty}
  + \sum^{ \infty }_{q = N} C \|P_q \partial_i T_{u^\nu_j} u^\nu_i\|_{L^\infty}
  + \sum^{ \infty }_{q = N} C \|P_q \partial_i R(u^\nu_i, u^\nu_j)\|_{L^\infty}.
\end{align*}

For these three parts, we successively conclude that:
\begin{align*}  
  \sum_{q = N}^{+ \infty}  \| P_q \frac{\partial_k}{-\Delta} 
  T_{\partial_i u^\nu_j} \partial_j u^\nu_i\|_{L^\infty}
  & \leq \sum_{q = N}^{+ \infty} 2^{-q} \| P_q T_{\partial_i u^\nu_j} 
  \partial_j u^\nu_i\|_{L^\infty} \\
  & \leq \sum_{q = N}^{+ \infty} 2^{-q} 
  \| P_q \sum_{ |m-q|<5 } P_{<m-1}
  \partial_i u^\nu_j P_m \partial_j u^\nu_i\|_{L^\infty} \\
  & \leq \sum_{q = N}^{+ \infty} 2^{-q} \sum_{ |m-q|<5 }
  \| P_{<m-1}\partial_i u^\nu_j P_m \partial_j u^\nu_i\|_{L^\infty}\\
  & \leq \sum_{q = N}^{+ \infty} 2^{-q} \sum_{ |m-q|<5 } 
  2^{\frac{m}{2}} \sup_{m} 2^{-\frac{m}{2}}
  \| P_{<m-1}\partial_i u^\nu_j \|_{L^\infty} \| P_m \partial_j u^\nu_i\|_{L^\infty} \\
  & \leq \sum_{q = N}^{+ \infty} 2^{-q} \sum_{ |m-q|<5 } 2^{\frac{m}{2}}
  \| \partial_i u^\nu_j \|_{\dot B^{-\frac{1}{2}}_{\infty , \infty}} \|\omega^\nu\|_{L^\infty} \\
  & \leq 2^{-N} 2^{\frac{N}{2}} \| u^\nu \|_{\dot B^{\frac{1}{2}}_{\infty , \infty}} 
  \|\omega^\nu\|_{L^\infty} \\
  & \leq 2^{-\frac{N}{2}} \| u^\nu \|^{\frac{1}{2}}_{ L^{\infty} } 
  \|\omega^\nu\|^{\frac{3}{2}}_{L^\infty},
\end{align*}
where we use the proposition \ref{prop:lp-orthogonality} (ii) and embedding inequality in besov space.

Then, we apply the same method to control the next term:
\begin{align*}
  \sum^{ \infty }_{q = N} \|P_q \partial_i T_{u^\nu_j} u^\nu_i\|_{L^\infty}
  & \leq \sum^{ \infty }_{q = N} \|P_q T_{\partial_i u^\nu_j} u^\nu_i\|_{L^\infty} \\
  & \leq \sum^{ \infty }_{q = N} \sum_{|m-q|<5} 
  \| P_{<m-1} \partial_i u^\nu_j P_m u^\nu_i\|_{L^\infty} \\
  & \leq \sum^{ \infty }_{q = N} \sum_{|m-q|<5} 
  \| P_{<m-1} \partial_i u^\nu_j\|_{L^\infty} \| P_m u^\nu_i\|_{L^\infty} \\
  & \leq \sum^{ \infty }_{q = N} \sum_{|m-q|<5} 
  2^{-m} \| P_{<m-1} \partial_i u^\nu_j\|_{L^\infty} \| P_m \partial_i u^\nu_i\|_{L^\infty} \\
  & \leq \sum^{ \infty }_{q = N} \sum_{|m-q|<5} 2^{-m/2} 
  \| P_{<m-1} \partial_i u^\nu_j\|_{L^\infty} 2^{-m/2}  
  \| P_m \partial_i u^\nu_i\|_{L^\infty} \\
  & \leq 2^{-\frac{N}{2}} \| \partial_i u^\nu_j\|_{\dot B^{-\frac{1}{2}}_{\infty , \infty}} 
  \| \omega^\nu\|_{L^\infty} \\
  & \leq 2^{-\frac{N}{2}}  \| \omega^\nu\|^{\frac{3}{2}}_{L^\infty} 
  \| u^\nu \|^{\frac{1}{2}}_{L^\infty}.
\end{align*}

For the remainder term, by Hölder's inequality, we have:
\begin{align*}
  \sum^{ \infty }_{q = N} \|P_q \partial_i R(u^\nu_i, u^\nu_j)\|_{L^\infty} 
  & \leq \sum^{ \infty }_{q = N} 2^{-q} 2^{q} 
  \|P_q \partial_j R(u^\nu_i, u^\nu_j)\|_{L^\infty} \\
  & \leq \sum^{ \infty }_{q = N} 2^{-q} \sup_{q}
  2^{q} \|P_q \partial_j R(u^\nu_i, u^\nu_j)\|_{L^\infty} \\
  & \leq \sum^{ \infty }_{q = N} 2^{-q} \sup_{q}
  2^{q} \|P_q \partial_j 
  (\sum_m P_m u^\nu_i \widetilde{P}_m u^\nu_j) \|_{L^\infty} \\
  & \leq \sum^{ \infty }_{q = N} 2^{-q} \sup_{q}
  2^{2q} \sum_m \|P_m u^\nu_i\|_{L^\infty} 
  \|\widetilde{P}_m u^\nu_j\|_{L^\infty}.
\end{align*}

Applying the relation between the vorticity and the velocity field
on the localized frequency, we obtain:
\begin{align*}
  \sum^{ \infty }_{q = N} \|P_q \partial_i R(u^\nu_i, u^\nu_j)\|_{L^\infty}
  & \leq \sum^{ \infty }_{q = N} 2^{-q} \sup_{q}
  2^{2q} \sum_{m>q-5} 2^{-2m} \|\omega^\nu\|^2_{L^\infty} \\
  & \leq \sum^{ \infty }_{q = N} 2^{-q} \sup_{q}
  2^{2q} 2^{-2(q-5)} \|\omega^\nu\|^2_{L^\infty} \\
  & \leq 2^{-N} \| \omega^\nu \|^2_{L^\infty}.
\end{align*}

Consequently, by Young inequality and the control of vorticity, we derive:
\begin{align*}
  & \| u^\nu \|_{L^\infty} \\
  \leq & \int^t_0 [C 2^N \| u^\nu \|^2_{L^\infty} +
  C 2^{-\frac{N}{2}} \| u^\nu \|^{\frac{1}{2}}_{L^\infty} 
  \| \omega^\nu \|^{\frac{3}{2}}_{L^\infty} +
  C 2^{-N} \| \omega^\nu \|^2_{L^\infty} ] ds \\
  & + \| u^\nu_0 \|_{L^\infty} 
  + \|\mathcal{P}  g^\nu \|_{L^1([0,t];L^\infty)}\\
  \leq & \| u^\nu_0 \|_{L^\infty} + \int^t_0 [
  C 2^N \| u^\nu \|^2_{L^\infty} + C 2^{-N} \| \omega^\nu \|^2_{L^\infty}] ds
  + \|\mathcal{P}  g^\nu \|_{L^1([0,t];L^\infty)}\\
  \leq & \| u^\nu_0 \|_{L^\infty} + C \| \omega^\nu \|_{L^\infty} 
  \int^t_0 \| u^\nu \|_{L^\infty} ds
  + \|\mathcal{P}  g^\nu \|_{L^1([0,t];L^\infty)},
\end{align*}
where we set $2^N = C' \| u^\nu \|^{-1}_{L^\infty} \| \omega^\nu \|_{L^\infty} $.

Finally, according to Gronwall inequality, we have the result:
\begin{align*}
  \| u^\nu \|_{L^\infty} & \leq 
  (\| u^\nu_0 \|_{L^\infty}+ \|\mathcal{P}  g^\nu \|_{L^1([0,t];L^\infty)})
   e^{C\| \omega^\nu \|_{L^\infty} t}.
\end{align*}

Their form is similar to the estimates for the Euler equations(Theorem \ref{thm:euler_control}).

\section{Proof of the main theorems}\label{sec:main}
This section is dedicated to establishing the convergence results 
presented in the previous section.
We first prove the convergence of the velocity fields (Theorem \ref{thm:convergence}).
For certain terms, we adopt a specific annular decomposition approach.
Then, we establish the convergence of the vorticity fields (Theorem \ref{thm:convergence_omega}).
To address some challenging terms, we resort to the method of approximate equations.

\subsection{Proof of Theorem \ref{thm:convergence}}
In this subsection, we focus on the convergence of the velocity fields.
We analyze directly by subtracting the two equations.
For the functional space we choose, we need to use
the equivalent form of the $L^2_{\Uloc}$ norm in Section \ref{sec:intro}.

Let $\delta u = u^\nu - u$, $\delta p = p^\nu - p$, and $\delta g = g^\nu - g$.
Considering the equations \eqref{eq:n-s} and \eqref{eq:euler} with cut-off
function $\varphi_{R,y}(x)$, we have:
\begin{equation}\label{eq:d_v}
  \begin{array}{rl}
    &\partial_t (\varphi_{R,y} \delta u) + (u^\nu \cdot \nabla) (\varphi_{R,y} \delta u) 
    + (\varphi_{R,y} \delta u) \cdot \nabla u\\
    =& \nu \varphi_{R,y} \Delta u^\nu - \varphi_{R,y} \nabla \delta p
    + (u^\nu \cdot \nabla \varphi_{R,y} ) \delta u + \varphi_{R,y} \delta g.
  \end{array}
\end{equation}

Multiplying by $ \varphi_{R,y} \delta u $ and then integrating the resulting equality,
we thus obtain the following successive conclusions,
for the first term on the right-hand side:
\begin{align*}
  \nu \int_{\Ro^2} \varphi_{R,y} \Delta u^\nu  \varphi_{R,y} \delta u \,dx
  & \leq \nu \|  \varphi_{R,y} \Delta u^\nu \|_{L^2} \| \varphi_{R,y} \delta u \|_{L^2} \\
  & \leq \nu \|  \varphi_{R,y} \nabla \times \omega^\nu \|_{L^2} \| \varphi_{R,y} \delta u \|_{L^2} \\
  & \leq \nu \|  \varphi_{R,y} \nabla \omega^\nu \|_{L^2} \| \varphi_{R,y} \delta u \|_{L^2}.
\end{align*}

Using the equation \eqref{eq:n-s_o} and $ \varphi_{R,y} $, we have:
\begin{align*}
  \partial_t \varphi_{R,y} \omega^\nu + \varphi_{R,y} (u^\nu \cdot \nabla) \omega^\nu
  = & \nu \varphi_{R,y} \Delta \omega^\nu + \varphi_{R,y} \nabla \times g^\nu,
\end{align*}
which employ
\begin{align*}
  & \partial_t \varphi_{R,y} \omega^\nu + (u^\nu \cdot \nabla) (\varphi_{R,y} \omega^\nu) \\
  = & \nu \Delta (\varphi_{R,y} \omega^\nu) - (u \cdot \nabla \varphi_{R,y}) \omega^\nu - \nu \omega^\nu \Delta \varphi_{R,y}
  - 2 \nu (\nabla \omega^\nu) \cdot (\nabla \varphi_{R,y}) + \varphi_{R,y} \nabla \times g^\nu.
\end{align*}

Multiplying by $ \varphi_{R,y} \omega^\nu $ and integrating the resulting equality,
it follows that:
\begin{align*}
  &\frac{1}{2} \frac{d}{dt} \| \varphi_{R,y} \omega^\nu \|^2_{L^2} +
  \nu \| \nabla (\varphi_{R,y} \omega^\nu) \|^2_{L^2}\\
  \leq & \int_{\Ro^2} (u^\nu \cdot \nabla \varphi_{R,y}) \omega^\nu \varphi_{R,y} \omega^\nu \,dx 
  + \nu \int_{\Ro^2} \omega^\nu (\Delta \varphi_{R,y}) \varphi_{R,y} \omega^\nu \,dx
  + \| \varphi_{R,y} \nabla \times g^\nu \|_{L^2} \| \varphi_{R,y} \omega^\nu \|_{L^2} \\
  & + 2 \nu \int_{\Ro^2} (\nabla \omega^\nu) \cdot (\nabla \varphi_{R,y}) \varphi_{R,y} \omega^\nu \,dx \\
  \leq &  \| u^\nu \|_{L^\infty} \| \nabla \varphi_{R,y} \|_{L^\infty} 
  \| \varphi_{2R,y} \omega^\nu \|_{L^2} \| \varphi_{R,y} \omega^\nu \|_{L^2} 
  + 2 \nu \| \varphi_{2R,y} \omega^\nu \|_{L^2} 
  \| \Delta \varphi_{R,y} \|_{L^\infty} \| \varphi_{R,y} \omega^\nu \|_{L^2} \\
  & + \| \varphi_{R,y} \nabla \times g^\nu \|_{L^2} \| \varphi_{R,y} \omega^\nu \|_{L^2}
  + \nu \| \nabla \varphi_{R,y} \|_{L^\infty} 
  \| \varphi_{2R,y} \omega^\nu \|_{L^2} \| \nabla (\varphi_{R,y} \omega^\nu) \|_{L^2}.
\end{align*}

For the last term, by Young inequality, we have:
\begin{align*}
  \nu \| \nabla \varphi_{R,y} \|_{L^\infty} 
  \| \varphi_{2R,y} \omega^\nu \|_{L^2} \| \nabla (\varphi_{R,y} \omega^\nu) \|_{L^2} 
  &\leq \frac{\nu}{2} \| \nabla \varphi_{R,y} \|^2_{L^\infty} \| \varphi_{2R,y} \omega^\nu \|^2_{L^2}
  + \frac{\nu}{2} \| \nabla (\varphi_{R,y} \omega^\nu) \|^2_{L^2}.
\end{align*}

\begin{remark}\label{rem:phi_R_2R_relation}
For the cut-off functions $\varphi_{R,y}$ and $\varphi_{2R,y}$,
we have the following relationship between their $L^2$ norms:
  \begin{align*}
    \|\varphi_{R,y} f\|_{L^2} \leq \|\varphi_{2R,y} f\|_{L^2} 
    \leq C  \sup_{y} \|\varphi_{R,y} f\|_{L^2},
  \end{align*}
for some constant $C > 0$ independent of $R$ and $y$. 
\end{remark}

Applying the proposition \ref{prop:cut-off} and 
taking the supremum of the right-hand side of the inequality over $y \in \Ro^2$,
we find that:
\begin{align*}
  & \frac{1}{2} \frac{d}{dt} \| \varphi_{R,y} \omega^\nu \|^2_{L^2} +
  \frac{1}{2} \nu \| \nabla (\varphi_{R,y} \omega^\nu) \|^2_{L^2} \\
  \leq & (\frac{C_1}{R} \| u^\nu \|_{L^\infty} + \frac{C_2}{R^2} \nu) 
  \sup_{y} \| \varphi_{R,y} \omega^\nu \|^2_{L^2}
  + \sup_{y} \| \varphi_{R,y} \nabla \times g^\nu \|_{L^2}
  \sup_{y} \| \varphi_{R,y} \omega^\nu \|_{L^2}\\
  \leq & (\frac{C_1}{R} e^{C t}
  + \frac{C_2}{R^2} \nu) \sup_{y} \| \varphi_{R,y} \omega^\nu \|^2_{L^2}
  + \sup_{y} \| \varphi_{R,y} \nabla \times g^\nu \|_{L^2}
  \sup_{y} \| \varphi_{R,y} \omega^\nu \|_{L^2} \\
  \leq & (\frac{C_1}{R} e^{C t}
  + \frac{C_2}{R^2} \nu) \sup_{y} \| \varphi_{R,y} \omega^\nu \|^2_{L^2}
  + \frac{1}{2} \| \nabla \times g^\nu \|_{L^\infty}
  + \frac{1}{2} \| \nabla \times g^\nu \|_{L^\infty}
  \sup_{y} \| \varphi_{R,y} \omega^\nu \|^2_{L^2}.
\end{align*}

Integrating the above inequality with respect to time $t$, 
then taking the supremum over $y \in \Ro^2$ again:
\begin{align*}
  &\sup_y \| \varphi_{R,y} \omega^\nu \|^2_{L^2} + 
  \nu \sup_y \int^t_0 \| \nabla (\varphi_{R,y} \omega^\nu) \|^2_{L^2} \,ds\\ 
  \leq &\sup_y \| \varphi_{R,y} \omega^\nu_0 \|^2_{L^2} 
  + C_3 \nu \int^t_0 \sup_y \| \varphi_{R,y} \omega^\nu \|^2_{L^2} \,ds 
  + C_4 \int^t_0 e^{C t} \sup_y \| \varphi_{R,y} \omega^\nu \|^2_{L^2} \,ds\\ 
  & + \int^t_0 \| \nabla \times g^\nu \|_{L^\infty} \,ds
  + \int^t_0 \| \nabla \times g^\nu \|_{L^\infty}
  \sup_{y} \| \varphi_{R,y} \omega^\nu \|^2_{L^2} \,ds.
\end{align*}

By Gronwall inequality, we conclude:
\begin{align*}
  \sup_y \| \varphi_{R,y} \omega^\nu \|^2_{L^2} + 
  \nu \sup_y \int^t_0 \| \nabla (\varphi_{R,y} \omega^\nu) \|^2_{L^2} \,ds \leq &
  C e^{ C e^{ct}}.
\end{align*}

For the second term on the right-hand of \eqref{eq:d_v}:
\begin{align*}
  - \int_{\Ro^2} \varphi_{R,y} \nabla \delta p \varphi_{R,y} \delta u \, dx
  & = \int_{\Ro^2} \nabla \cdot (\varphi_{R,y}^2 \delta u) \delta p \, dx \\
  & = 2 \int_{\Ro^2} (\varphi_{R,y} \delta u) (\nabla \cdot \varphi_{R,y}) \delta p \, dx \\
  & \leq 2 \| \nabla \varphi_{R,y} \|_{L^\infty} 
  \| \varphi_{R,y} \delta u \|_{L^2} \| \varphi_{2R,y} \delta p \|_{L^2}.
\end{align*}

We need to treat the pressure in two parts.
Let 
\[
  p_1= - \frac{1}{\Delta} \nabla \cdot \nabla \cdot (\varphi_{4R,y} u \otimes u),
\]
and
\[
  p_2= - \frac{1}{\Delta} \nabla \cdot \nabla \cdot ((1-\varphi_{4R,y}) u \otimes u),
\]
so $p = p_1 + p_2$.

According to the proposition \ref{prop:uloc_convolution},
for the convolution in $p_1$, we have:
\begin{align*}
  - \int_{\Ro^2} \varphi_{R,y} \nabla p_1 \varphi_{R,y} \delta u \, dx
  & = \int_{\Ro^2} \nabla \cdot (\varphi_{R,y}^2 \delta u)  p_1 \, dx \\
  & = 2 \int_{\Ro^2} (\varphi_{R,y} \delta u) (\nabla \cdot \varphi_{R,y}) p_1 \, dx \\
  & \leq 2 \| \nabla \varphi_{R,y} \|_{L^\infty} \| \varphi_{R,y} \delta u \|_{L^2} \| \varphi_{2R,y} p_1 \|_{L^2} \\
  & \leq C \| \nabla \varphi_{R,y} \|_{L^\infty} \| \varphi_{R,y} \delta u \|_{L^2} \| u \otimes u \|_{L^2_{\Uloc}}\\
  & \leq C \| \nabla \varphi_{R,y} \|_{L^\infty} \| \varphi_{R,y} \delta u \|_{L^2} \| u \|_{L^2_{\Uloc}} \| u \|_{L^\infty}.
\end{align*}

For $\delta p_1$, using the same method as $p_1$, we obtain:
\begin{align*}
  - \int_{\Ro^2} \varphi_{R,y} \nabla \delta p_1 \varphi_{R,y} \delta u \, dx
  \leq & \frac{C}{R} \| \varphi_{R,y} \delta u \|_{L^2} \| u^\nu \otimes u^\nu - u \otimes u\|_{L^2_{\Uloc}} \\
  \leq & \frac{C}{R} \| \varphi_{R,y} \delta u \|_{L^2} \| u^\nu \otimes \delta u + \delta u \otimes u\|_{L^2_{\Uloc}} \\
  \leq & \frac{C}{R} \| \varphi_{R,y} \delta u \|_{L^2} \| \delta u \|_{L^2_{\Uloc}} (\| u^\nu \|_{L^\infty} + \| u \|_{L^\infty}).
\end{align*}

For $p_2$ and $\delta p_2$, we apply annular decomposition 
to the region $I-\varphi_{4R,y}(z)$:
\begin{align*}
  | \varphi_{R,y} \nabla p_2 |
  \leq &\varphi_{R,y} C \int_{\Ro^2} \frac{1}{|x-z|^3} [(1-\varphi_{4R,y}(z)) |u \otimes u|] \, dz \\
  \leq & \sum_{n=2}^\infty \frac{C}{2^{3n} R^3} 2^{2n} \int_{A_n} |u \otimes u| \, dz \\
  \leq & C \| |u \otimes u| \|_{L^1_{\Uloc}},
\end{align*}
where we set(see Figure \ref{fig:annular decomposition}):
\[
  A_n = \{z \in \Ro^2 \mid |z-y| \geq 2^n R, |z-y| \leq 2^{n+1} R\}.
\]

\begin{figure}[ht]
  \centering
  \includegraphics[width=0.6\textwidth]{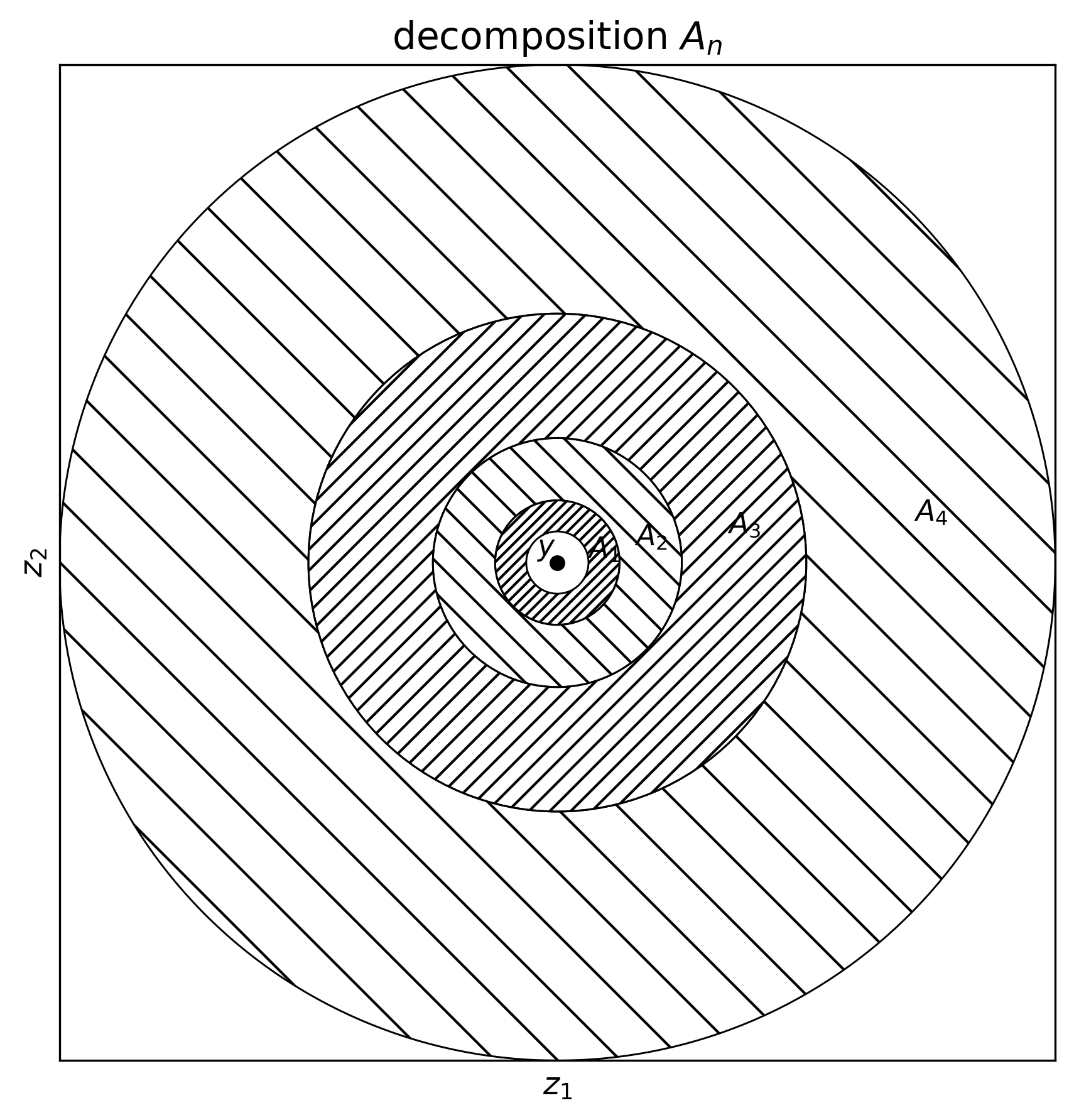}
  \caption{annular decomposition $A_n$}
  \label{fig:annular decomposition}
\end{figure}

Then, using the above inequality, it holds that:
\begin{align*}
  - \int_{\Ro^2} \varphi_{R,y} \nabla p_2 \varphi_{R,y} \delta u \, dx
  \leq & C \| |u \otimes u| \|_{L^1_{\Uloc}} \int_{\Ro^2} \varphi_{2R,y} \varphi_{R,y} \delta u \, dx\\
  \leq & C \| |u \otimes u| \|_{L^1_{\Uloc}} \| \varphi_{2R,y} \|_{L^2} \| \varphi_{R,y} \delta u \|_{L^2} \\
  \leq & C \| |u \otimes u| \|_{L^1_{\Uloc}} \| \varphi_{R,y} \delta u \|_{L^2}.
\end{align*}

For $\delta p_2$, we replace the $L^1_{\Uloc}$ norm:
\begin{align*}
  - \int_{\Ro^2} \varphi_{R,y} \nabla \delta p_2 \varphi_{R,y} \delta u \, dx
  \leq & C \| |u^\nu \otimes u^\nu - u \otimes u| \|_{L^1_{\Uloc}} \| \varphi_{R,y} \delta u \|_{L^2} \\
  \leq & C ( \| u^\nu \|_{L^\infty} + \| u \|_{L^\infty} ) \| \delta u \|_{L^2_{\Uloc}}
  \| \varphi_{R,y} \delta u \|_{L^2}.
\end{align*}

So we finally conclude:
\begin{align*}
  & - \int_{\Ro^2} \varphi_{R,y} \nabla \delta p \varphi_{R,y} \delta u \, dx \\
  \leq & \frac{C}{R} \| \varphi_{R,y} \delta u \|_{L^2} \| \delta u \|_{L^2_{\Uloc}} 
  (\| u^\nu \|_{L^\infty} + \| u \|_{L^\infty}) 
  + C ( \| u^\nu \|_{L^\infty} + \| u \|_{L^\infty} ) \| \delta u \|_{L^2_{\Uloc}}
  \| \varphi_{R,y} \delta u \|_{L^2} \\
  \leq & C' ( \| u^\nu \|_{L^\infty} + \| u \|_{L^\infty} ) \| \delta u \|_{L^2_{\Uloc}}
  \| \varphi_{R,y} \delta u \|_{L^2}.
\end{align*}

For the third term on the right-hand of \eqref{eq:d_v}, 
using the result in Section \ref{sec:est}, we have:
\begin{align*}
  & \int_{\Ro^2} (u^\nu \cdot \nabla \varphi_{R,y} ) \varphi_{2R,y} \delta u \varphi_{R,y} \delta u \,dx \\
  \leq & \| u^\nu \|_{L^\infty} \frac{1}{R} \| \nabla \varphi \|_{L^\infty} 
  \| \varphi_{2R,y} \delta u \|_{L^2} \| \varphi_{R,y} \delta u \|_{L^2} \\
  \leq & \frac{C}{R} \| \varphi_{R,y} \delta u \|_{L^2} \| \varphi_{2R,y} \delta u \|_{L^2} \| u^\nu \|_{L^\infty}\\
  \leq & \frac{C}{R} \| \varphi_{R,y} \delta u \|_{L^2}
  2^2 \sup_{y} \| \varphi_{R,y} \delta u \|_{L^2} \| u^\nu \|_{L^\infty}\\
  \leq & \frac{C}{R} \| \varphi_{R,y} \delta u \|_{L^2} \sup_{y} 
  \| \varphi_{R,y} \delta u \|_{L^2}  \| u^\nu \|_{L^\infty}.
\end{align*}

For the third term on the left-hand side of \eqref{eq:d_v},
we apply Hölder inequality($p>2$):
\begin{align*}
  \int_{\Ro^2} (\varphi_{R,y} \delta u) \cdot \nabla u (\varphi_{R,y} \delta u) \, dx
  & \leq \|\varphi_{2R,y} \nabla u\|_{L^p} 
  \|\varphi_{R,y} \delta u\|_{L^{\frac{2p}{p-2}}} \|\varphi_{R,y} \delta u\|_{L^2}.
\end{align*}

According to Biot-Savart law, it follows that:
\[
  u(x) = \frac{1}{2\pi} \int_{\Ro^2} 
  \frac{(x-z)^\perp}{|x-z|^2} \omega(z) dz.
\]

\begin{remark}\label{rem:biot-savart-remark}
  For the velocity $u$, we have:
  \begin{align*}
    \nabla u(x) = \frac{1}{2\pi} \int_{\Ro^2} \frac{1}{|x-y|^4}
    \begin{pmatrix}
      2(x_1-y_1)(x_2-y_2) & (x_2-y_2)^2 - (x_1-y_1)^2 \\
       (x_2-y_2)^2 - (x_1-y_1)^2 & -2(x_1-y_1)(x_2-y_2)
    \end{pmatrix}
    \omega(y) \, dy.
  \end{align*}
\end{remark}

For $\|\varphi_{2R,y} \nabla u\|_{L^p}$,
we find that all first-order partial derivatives of $u$ 
have the following form:
\[
  \frac{1}{2\pi} \int_{\Ro^2} \frac{C}{|x-z|^2} \omega(z) \, dz.
\]

For brevity, we exemplify this proof with $\frac{\partial u_1}{\partial x_1}$, 
and the others can be treated analogously.
\begin{align*}
  \varphi_{2R,y} \frac{\partial u_1}{\partial x_1} = & \varphi_{2R,y} \frac{1}{2\pi} \int_{\Ro^2} \frac{2(x_1-z_1)(x_2-z_2)}{|x-z|^4} \omega(z) \, dz\\
  = & \varphi_{2R,y} \frac{1}{2\pi} \int_{\Ro^2} \frac{2(x_1-z_1)(x_2-z_2)}{|x-z|^4} \omega(z) \varphi_{4R,y}(z) \, dz \\
  & + \varphi_{2R,y} \frac{1}{2\pi} \int_{\Ro^2} 
  \frac{2(x_1-z_1)(x_2-z_2)}{|x-z|^4} \omega(z) (1-\varphi_{4R,y}(z)) \, dz.
\end{align*}

By these two parts, we can also use annular decomposition:
\begin{align*}
  F_1 & \defeq \varphi_{2R,y} 
  \frac{1}{2\pi} \int_{\Ro^2} \frac{2(x_1-z_1)(x_2-z_2)}{|x-z|^4} \omega(z) \varphi_{4R,y}(z) \, dz,\\
  F_2 & \defeq \varphi_{2R,y} 
  \frac{1}{2\pi} \int_{\Ro^2} \frac{2(x_1-z_1)(x_2-z_2)}{|x-z|^4} \omega(z) (1-\varphi_{4R,y}(z)) \, dz.
\end{align*}

For $F_1$, we can use Biot-Savart law directly.
For $F_2$, it holds that:
\begin{align*}
  F_2 = & \varphi_{2R,y} \frac{1}{2\pi} \int_{\Ro^2} \frac{2(x_1-z_1)(x_2-z_2)}{|x-z|^4} \omega(z) (1-\varphi_{4R,y}(z)) \, dz \\
  = & \varphi_{2R,y} \frac{1}{2\pi} \int_{\Ro^2} \frac{2(x_1-z_1)(x_2-z_2)}{|x-z|^4} 
  (\frac{\partial u_2}{\partial x_1} - \frac{\partial u_1}{\partial x_2}) (1-\varphi_{4R,y}(z)) \, dz \\
  = & \varphi_{2R,y} \frac{1}{2\pi} \int_{\Ro^2} \frac{2(x_1-z_1)(x_2-z_2)}{|x-z|^4}
  \bigg[ \partial_1 [(1-\varphi_{4R,y}(z))u_2]- [\partial_1(1-\varphi_{4R,y}(z))u_2]  \\
  & \qquad \qquad \quad -\partial_2 [(1-\varphi_{4R,y}(z))u_1] + [\partial_2(1-\varphi_{4R,y}(z))u_1] \bigg] \, dz.
\end{align*}

Of the four terms within the square brackets, the first and third belong to one type, 
and the second and fourth belong to another.

For the first type in $F_2$, using the $A_n$ from the annular decomposition:
\begin{align*}
  & \big| \varphi_{2R,y} \frac{1}{2\pi} \int_{\Ro^2} \frac{2(x_1-z_1)(x_2-z_2)}{|x-z|^4}
  \big[ \partial_1 [(1-\varphi_{4R,y}(z))u_2] \big] \, dz \big| \\
  \leq & \varphi_{2R,y} C \int_{\Ro^2} \frac{1}{|x-z|^3} [(1-\varphi_{4R,y}(z)) |u|] \, dz \\
  \leq & \sum_{n=2}^\infty \frac{C}{2^{3n} R^3} 2^{2n} \int_{A_n} |u| \, dz \leq C \| u \|_{L^1_{\Uloc}} \leq C \| u \|_{L^\infty}.
\end{align*}

For the second type in $F_2$, we find that $\partial_1 (1-\varphi_{4R,y}(z))$
is non-zero and bounded only under the condition $|z| \in [4R,8R]$.
So we have:
\begin{align*}
  & \bigg| \varphi_{2R,y} \frac{1}{2\pi} \int_{\Ro^2} \frac{2(x_1-z_1)(x_2-z_2)}{|x-z|^4}
  \big[ \partial_1 [(1-\varphi_{4R,y}(z))u_2] \big] \, dz \bigg| \\
  \leq & \varphi_{2R,y} C \int_{\Ro^2} \frac{1}{|x-z|^2} 
  [\partial_1 (1-\varphi_{4R,y}(z)) \varphi_{8R,y} u_2] \, dz \\
  \leq & C \| u \|_{L^\infty}.
\end{align*}

Consequently, we obtain:
\begin{align*}
  & \int_{\Ro^2} (\varphi_{R,y} \delta u) \cdot \nabla u (\varphi_{R,y} \delta u) \, dx \\
  \leq & C p \|\varphi_{2R,y} \omega \|_{L^p} 
  \|\varphi_{R,y} \delta u\|_{L^{\frac{2p}{p-2}}} \|\varphi_{R,y} \delta u\|_{L^2} 
  + C \| u \|_{L^\infty} \|\varphi_{R,y} \delta u\|^2_{L^2}.
\end{align*}

For the first term in the right side of the above equality, we have:
\begin{align*}
  & C p \|\varphi_{2R,y} \omega \|_{L^p} 
  \|\varphi_{R,y} \delta u\|_{L^{\frac{2p}{p-2}}} \|\varphi_{R,y} \delta u\|_{L^2} \\
  \leq & C \| \omega_0 \|_{L^\infty} p 
  \|\varphi_{R,y} \delta u\|^{1-\frac{2}{p}}_{L^{2}} \|\varphi_{R,y} \delta u\|^{\frac{2}{p}}_{L^{\infty}}
  \|\varphi_{R,y} \delta u\|_{L^2}.
\end{align*}

\begin{remark}\label{rem:inf_f}
Consider the function \( f(p) = p a^{-\frac{2}{p}} \)
for \( p > 2 \) and \( a > 0 \). we have:
\[
  f'(p) = a^{-\frac{2}{p}} \left( 1 + \frac{2 \ln a}{p} \right).
\]

It holds that:
\[
  \inf_{p > 2} f(p) = 
  \begin{cases} 
    2 a^{-1}, & \text{if } a \geq 1, \\
    -2 e \ln a, & \text{if } 0 < a < 1.
  \end{cases}
\]
\end{remark}

For $p \|\varphi_{R,y} \delta u\|^{-\frac{2}{p}}_{L^{2}}$,
we use the remark \ref{rem:inf_f}.
When $\|\varphi_{R,y} \delta u\|_{L^{2}} > 1$, we replace this term with $2$.

Moreover, it is important to note the case $\|\varphi_{R,y} \delta u(t)\|_{L^{2}} =0$ in some times $t$.

(For $\forall \varepsilon > 0$,
the time derivative allows us to replace $\|\varphi_{R,y} \delta u\|_{L^{2}}$ with
$(\|\varphi_{R,y} \delta u\|_{L^{2}} + \varepsilon)$.)

By Young inequality, it follows that:
\begin{align*}
  & C p \|\varphi_{2R,y} \omega \|_{L^p} 
  \|\varphi_{R,y} \delta u\|_{L^{\frac{2p}{p-2}}} \|\varphi_{R,y} \delta u\|_{L^2} \\
  \leq & C \| \omega_0 \|_{L^\infty} 
  \|\varphi_{R,y} \delta u\|^{\frac{2}{p}}_{L^{\infty}}\|\varphi_{R,y} \delta u\|^2_{L^2} 
  \max \{-e \ln (\|\varphi_{R,y} \delta u\|_{L^{2}} + \varepsilon), 1\} \\
  \leq & C \| \omega_0 \|_{L^\infty}
  (1+\| \varphi_{R,y} \delta u\|_{L^{\infty}})
  \| \varphi_{R,y} \delta u\|^2_{L^2} 
  \max \{-e \ln (\|\varphi_{R,y} \delta u\|_{L^{2}} + \varepsilon), 1\}\\
  \leq & C \| \omega_0 \|_{L^\infty}
  (1+\|u\|_{L^\infty}+\|u^\nu\|_{L^\infty})
  \| \varphi_{R,y} \delta u\|^2_{L^2} 
  \max \{-e \ln (\|\varphi_{R,y} \delta u\|_{L^{2}} + \varepsilon), 1\}.
\end{align*}

Recalling the equation for $\varphi_{R,y} \delta u$, we derive:
\begin{align*}
  & \frac{1}{2} \frac{d}{dt} \|\varphi_{R,y} \delta u\|^2_{L^2} \\
  = & -\int_{\Ro^2} (\varphi_{R,y} \delta u) \cdot \nabla u (\varphi_{R,y} \delta u) \, dx
  + \int_{\Ro^2} (u^\nu \cdot \nabla \varphi_{R,y} ) \varphi_{2R,y} \delta u \varphi_{R,y} \delta u \,dx  \\
  & + \nu \int_{\Ro^2} \varphi_{R,y} \Delta u^\nu  \varphi_{R,y} \delta u \,dx
  - \int_{\Ro^2} \varphi_{R,y} \nabla \delta p \varphi_{R,y} \delta u \, dx
  + \int_{\Ro^2} \varphi_{R,y} \delta g \varphi_{R,y} \delta u \, dx.
\end{align*}

By the all of the above inequalities and Remark \ref{rem:inf_f}, we have:
\begin{align*}
  & \frac{d}{dt} (\|\varphi_{R,y} \delta u\|_{L^{2}} + \varepsilon) \\
  \leq & C \| \omega_0 \|_{L^\infty} (1+\|u\|_{L^\infty}+\|u^\nu\|_{L^\infty})
  (\|\varphi_{R,y} \delta u\|_{L^{2}} + \varepsilon) 
  \max \{-e \ln (\|\varphi_{R,y} \delta u\|_{L^{2}} + \varepsilon), 1\} \\
  & + C \| u \|_{L^\infty} (\|\varphi_{R,y} \delta u\|_{L^{2}} + \varepsilon)
  + C \sup_{y} (\|\varphi_{R,y} \delta u\|_{L^{2}} + \varepsilon) 
  \| u^\nu \|_{L^\infty}\\
  & + \nu \|  \varphi_{R,y} \nabla \omega^\nu \|_{L^2} 
  + C \| \delta u \|_{L^2_{\Uloc}} (\| u \|_{L^\infty} + \| u^\nu \|_{L^\infty})
  + \| \delta g \|_{L^2_{\Uloc}}.
\end{align*}

(dihuan)
Finally, by the time integration of the inequality 
and then taking the supremum over $y$, we conclude:
\begin{align*}
  \| \delta u \|_{L^2_{\Uloc}} & \leq 
  C ( \nu t )^{\frac{1}{2}} e^{C e^{ct}}
  + ( \| \delta u_0 \|_{L^2_{\Uloc}} + \| \delta g \|_{L^1([0,t];L^2_{\Uloc})})
  \bigg[C e^{ C \exp(C t)} + C e^{\exp({C \exp(C t)})}\bigg] \\
  & \leq C ( \nu t )^{\frac{1}{2}} e^{C e^{ct}}
  + C ( \| \delta u_0 \|_{L^\infty} + \| \delta g \|_{L^1([0,t];L^\infty)})
  \bigg[C e^{ C \exp(C t)} + C e^{\exp({C \exp(C t)})}\bigg]\\
  & \leq C ( \nu t )^{\frac{1}{2}} e^{C e^{ct}}
  + C ( \| \delta u_0 \|_{L^\infty} + \| \delta g \|_{L^1([0,t];L^\infty)})
   \bigg[ C e^{\exp({C \exp(C t)})}\bigg].
\end{align*}
when $\varepsilon \rightarrow 0$.

When $\nu \rightarrow 0$, it holds that $\sup_{t \in [0,T]} \| \delta u \|_{L^2_{\Uloc}} \rightarrow 0$.
By the boundedness of the vorticity and proposition \ref{prop:inequality_uloc_l2}, 
it follows that:
\begin{align*}
  \sup_{t \in [0,T]} \| \delta u \|_{L^\infty}
  \leq C \sup_{t \in [0,T]} \| \delta u \|^{\frac{1}{2}}_{L^2_{\Uloc}} 
  \| \delta \omega \|^{\frac{1}{2}}_{L^\infty} \rightarrow 0.
\end{align*}

\subsection{Proof of Theorem \ref{thm:convergence_omega}}
In this subsection, we employ the approximation equations 
to prove the Theorem \ref{thm:convergence_omega}.
There are two types of approximation equations.
The first type is:
\begin{equation}\label{eq:approx_omega_a}\tag{4.2a}
  \left.\begin{array}{r}
    \partial_t \omega^\nu_n + u^\nu \cdot \nabla \omega^\nu_n - 
    \nu \Delta \omega^\nu_n = \nabla \times g^\nu \\
    \omega^\nu_n \big|_{t=0} = P_n \omega_0^\nu
  \end{array}\right\},
\end{equation}

\begin{equation}\label{eq:approx_omega_b}\tag{4.2b}
  \left.\begin{array}{r}
    \partial_t \omega_n + u \cdot \nabla \omega_n = \nabla \times g \\
    \omega_n \big|_{t=0} = P_n \omega_0
  \end{array}\right\}.
\end{equation}

They are respectively the approximation equations 
of N-S and Euler equations.

And the second type is the approximation to the first type:
\begin{equation}\label{eq:approx_app_a}\tag{4.3a}
  \left.\begin{array}{r}
    \partial_t \omega^\nu_{n,m} + (S_m u^\nu) \cdot \nabla \omega^\nu_{n,m} - 
    \nu \Delta \omega^\nu_{n,m} = \nabla \times g^\nu \\
    \omega^\nu_{n,m} \big|_{t=0} = P_n \omega_0^\nu
  \end{array}\right\},
\end{equation}

\begin{equation}\label{eq:approx_app_b}\tag{4.3b}
  \left.\begin{array}{r}
    \partial_t \omega_{n,m} + (S_m u) \cdot \nabla \omega_{n,m} = \nabla \times g \\
    \omega_{n,m} \big|_{t=0} = P_n \omega_0
  \end{array}\right\}.
\end{equation}

Note that the initial data are the same for both types of equations.

Let $\delta \omega = \omega^\nu - \omega$.
Applying the triangular inequality with equations above, 
we can get the conclusion of $\|\delta \omega\|_{L^2_{\Uloc}}$.
So we set:
\begin{align*}
  & E_1=\omega^\nu_n - \omega^\nu, \quad E_2=\omega_m - \omega_n, \quad
  E_3=\omega_{n,m} - \omega_n, \\
  & E_4=\omega^\nu_{n,m} - \omega^\nu_n, \quad E_5=\omega^\nu_{n,m} - \omega_{n,m}.
\end{align*}

For $E_1$ and Equation \eqref{eq:approx_omega_a}, we can write:
\begin{align*}
  & \partial_t \varphi_{R,y}(\omega^\nu_n - \omega^\nu) + 
  u^\nu \cdot \nabla [\varphi_{R,y}(\omega^\nu_n - \omega^\nu)] \\ 
  = & \nu \varphi_{R,y} \Delta (\omega^\nu_n - \omega^\nu) +
  (\omega^\nu_n - \omega^\nu) u^\nu \cdot \nabla \varphi_{R,y}.
\end{align*}

And we have:
\begin{align*}
  \frac{1}{2} \frac{d}{dt} \| \varphi_{R,y} E_1 \|^2_{L^2}
  \leq \nu \int_{\Ro^2} \varphi_{R,y} \Delta E_1 \varphi_{R,y} E_1 \, dx  
  + \int_{\Ro^2} (E_1) u^\nu \cdot (\nabla \varphi_{R,y}) \varphi_{R,y} E_1 \, dx.
\end{align*}

Applying the integration by parts and Young inequality, we derive:
\begin{align*}
  & \frac{d}{dt} \| \varphi_{R,y} E_1 \|^2_{L^2} + 
  2 \nu \| \varphi_{R,y} \nabla E_1 \|^2_{L^2} \\
  \leq & 4 \nu \int_{\Ro^2} (\nabla \varphi_{R,y}) E_1 (\varphi_{R,y} \nabla E_1) \, dx + 
  2 \int_{\Ro^2} (E_1) u^\nu \cdot (\nabla \varphi_{R,y}) \varphi_{R,y} E_1 \, dx \\
  \leq & C \nu \sup_y \| \varphi_{R,y} E_1 \|_{L^2} \| \varphi_{R,y} \nabla E_1 \|_{L^2} + C \| \varphi_{R,y} E_1 \|_{L^2} \sup_y \| \varphi_{R,y} E_1 \|_{L^2} \| u^\nu \|_{L^\infty} \\
  \leq & C_2 \nu (\sup_y \| \varphi_{R,y} E_1 \|_{L^2})^2 
  + \nu \| \varphi_{R,y} \nabla E_1 \|^2_{L^2}
  + C \| \varphi_{R,y} E_1 \|_{L^2} \sup_y \| \varphi_{R,y} E_1 \|_{L^2} \| u^\nu \|_{L^\infty}\\
  \leq & C_2 \nu (\sup_y \| \varphi_{R,y} E_1 \|_{L^2})^2 
  + \nu \| \varphi_{R,y} \nabla E_1 \|^2_{L^2}
  + C \| \varphi_{R,y} E_1 \|_{L^2} \sup_y \| \varphi_{R,y} E_1 \|_{L^2} e^{C t}.
\end{align*}

By Gronwall inequality, it follows that:
\begin{align*}
  \sup_{y} \| \varphi_{R,y} E_1 \|^2_{L^2} 
  + \sup_{y} \nu \int_{0}^{t}  \| \varphi_{R,y} \nabla E_1 \|^2_{L^2} \,ds
  \leq & \sup_{y} \| \varphi_{R,y} E_1(0) \|^2_{L^2} (e^{C \nu t} + e^{C \exp(C t)}).
\end{align*}

Considering the initial data, we find that
$\sup_{y} \| \varphi_{R,y} E_1(0) \|_{L^2} \rightarrow 0 (n \rightarrow \infty)$.

So we have 
\begin{align*}
  \sup_{t \in [0,T]} \sup_{y} \| \varphi_{R,y} (\omega^\nu_n - \omega^\nu) \|_{L^2} \rightarrow 0,
\end{align*}
when $n \rightarrow \infty$.

For $E_2$, we employ Fatou lemma:
\[
  \| \varphi_{R,y}(\omega - \omega_n) \|_{L^2} \leq 
  \liminf_{m \to \infty} \| \varphi_{R,y}(\omega_m - \omega_n) \|_{L^2}
  =\liminf_{m \to \infty} \| \varphi_{R,y} E_2 \|_{L^2}.
\]

By the equation \eqref{eq:approx_omega_b} and the definition of $E_2$, we have:
\begin{align*}
  \partial_t \varphi_{R,y} E_2 + u \cdot \nabla (\varphi_{R,y} E_2)
  = E_2 (u \cdot \nabla \varphi_{R,y}).
\end{align*}

Integrating the equation, it holds that:
\begin{align*}
  \frac{1}{2} \frac{d}{dt} \| \varphi_{R,y} E_2 \|^2_{L^2}
  & \leq C \| \varphi_{R,y} E_2 \|_{L^2} \sup_y \| \varphi_{R,y} E_2 \|_{L^2} \| u \|_{L^\infty} \\
  & \leq C \| \varphi_{R,y} E_2 \|_{L^2} \sup_y \| \varphi_{R,y} E_2 \|_{L^2} e^{C t}.
\end{align*}

We next integrate the time derivative and take the supremum over $y$:
\begin{align*}
  \sup_{y}\| \varphi_{R,y} E_2 \|_{L^2} 
  \leq \sup_{y} \| \varphi_{R,y} E_2(0) \|_{L^2} e^{C \exp(C t)}.
\end{align*}

Thus, when $n \rightarrow \infty$,
we have
\begin{align*}
  \sup_{t \in [0,T]} \sup_{y} \| \varphi_{R,y} (\omega - \omega_n) \|_{L^2} \rightarrow 0.
\end{align*}

For Equation \eqref{eq:approx_omega_b}, \eqref{eq:approx_app_b} and $E_3$, we can write:
\begin{align*}
  \partial_t \varphi_{R,y} E_3 + u \cdot \nabla (\varphi_{R,y} E_3)
  = E_3 (u \cdot \nabla \varphi_{R,y}) - \varphi_{R,y} (P_{\geq m} u) \cdot \nabla \omega_{n,m}.
\end{align*}

After integrating the equation, we need the control of $\| \nabla \omega_{n,m} \|_{L^2}$:
\begin{align*}
  &\frac{1}{2} \frac{d}{dt} \| \varphi_{R,y} E_3 \|^2_{L^2} \\
  \leq & C \| \varphi_{R,y} E_3 \|_{L^2} \sup_y \| \varphi_{R,y} E_3 \|_{L^2} 
  \| u \|_{L^\infty} + C 2^{-m} \| \varphi_{R,y} E_3 \|_{L^2}  \| \nabla \omega_{n,m} \|_{L^2}.
\end{align*}

According to the approximation equation in \eqref{eq:approx_app_b},
we multiply it by $\Delta \omega_{n,m}$ and integrate the equation:
\begin{align*}
  \frac{1}{2} \frac{d}{dt} \| \nabla \omega_{n,m} \|^2_{L^2} 
  & \leq C \| \nabla P_m u \|_{L^\infty} \| \nabla \omega_{n,m} \|^2_{L^2} \\
  & \leq C \| \Sigma^{m-1}_{q \geq 0} \nabla P_q u \|_{L^\infty} \| \nabla \omega_{n,m} \|^2_{L^2} \\
  & \leq C \| \sqrt{m} \|\nabla P_q u \|_{\ell^2} \|_{L^\infty} \| \nabla \omega_{n,m} \|^2_{L^2} \\
  & \leq C \sqrt{m} \| \nabla u \|_{\dot F^0_{\infty,2}}
  \| \nabla \omega_{n,m} \|^2_{L^2}.
\end{align*}

By the boundedness of Riesz transform in BMO space
and the equivalence between BMO norm
and $\dot F^0_{\infty,2}$ norm(see Remark \ref{rem:bmo-fsp-equivalence}
and \ref{rem:riesz-bmo-properties}),
we have $ \| \nabla u \|_{\dot F^0_{\infty,2}} < \infty $.

So we obtain:
\begin{align*}
  \| \nabla \omega_{n,m} \|^2_{L^2} & \leq
  \| \nabla P_n \omega_0 \|^2_{L^2} e^{C \sqrt{m} t},
\end{align*}
and
\begin{align*}
  \frac{d}{dt} \| \varphi_{R,y} E_3 \|_{L^2} 
  \leq  C \sup_y \| \varphi_{R,y} E_3 \|_{L^2}  
  e^{C t} +  C 2^{-m} \| \nabla P_n \omega_0 \|_{L^2} e^{C \sqrt{m} t}.
\end{align*}

Applying the proposition of operator $P_n$, 
it follows that$ \| \nabla P_n \omega_0 \|_{L^2} < \infty $ and
\begin{align*}
  \sup_{y} \| \varphi_{R,y} E_3 \|_{L^2} 
  \leq & \sup_{y} \| \varphi_{R,y} E_3(0) \|_{L^2} e^{C \exp(C t)}
  + C 2^{-m} \frac{2}{C \sqrt{m}} (e^{C \sqrt{m} t/2}-1).
\end{align*}

For any fixed $t$, the second term on the right-hand side 
can be made arbitrarily small by choosing $m$ sufficiently large.
So we conclude that:
\begin{align*}
  \sup_{t \in [0,T]} \sup_{y} \| \varphi_{R,y} (\omega_{n,m} - \omega_n) \|_{L^2} \rightarrow 0,
\end{align*}
when $m \rightarrow \infty$.

In the same way, we have the conclusion of $E_4$ and Equation \eqref{eq:approx_app_a}:
\begin{align*}
  & \frac{d}{dt} \| \varphi_{R,y} E_4 \|^2_{L^2} + 
  2 \nu \| \varphi_{R,y} \nabla E_4 \|^2_{L^2} \\
  \leq & 4 \nu \int_{\Ro^2} (\nabla \varphi_{R,y}) E_4 (\varphi_{R,y} \nabla E_4) \, dx 
  + C \| \varphi_{R,y} E_4 \|_{L^2} \sup_y \| \varphi_{R,y} E_4 \|_{L^2} \| u^\nu \|_{L^\infty} \\
  & +  C 2^{-m} \| \varphi_{R,y} E_4 \|_{L^2} \| \nabla \omega^\nu_{n,m} \|_{L^2}\\
  \leq & C \nu (\sup_y \| \varphi_{R,y} E_4 \|_{L^2})^2 
  + \nu \| \varphi_{R,y} \nabla E_4 \|^2_{L^2} 
  + C \| \varphi_{R,y} E_4 \|_{L^2} \sup_y \| \varphi_{R,y} E_4 \|_{L^2} e^{C t} \\
  & +  C 2^{-m} \| \varphi_{R,y} E_4 \|_{L^2} \| \nabla P_n \omega^\nu_0 \|_{L^2} e^{C \sqrt{m} t}.
\end{align*}

Then, we obtain that:
\begin{align*}
  &\sup_{y} \| \varphi_{R,y} E_4 \|^2_{L^2} 
  + \sup_{y} \nu \int_{0}^{t} \| \varphi_{R,y} \nabla E_4 \|^2_{L^2} \,ds \\
  \leq & \sup_{y} \| \varphi_{R,y} E_4(0) \|^2_{L^2} e^{C \exp(C t)}
  + C 2^{-m} \frac{2}{C \sqrt{m}} (e^{C \sqrt{m} t/2}-1).
\end{align*}

So when $m \rightarrow \infty$,
we derive
\begin{align*}
  \sup_{t \in [0,T]} \sup_{y} \| \varphi_{R,y} (\omega^\nu_{n,m} - \omega^\nu_n) \|_{L^2} \rightarrow 0.
\end{align*}

For Equation \eqref{eq:approx_app_a}, \eqref{eq:approx_app_b} and $E_5$,
we obtain the following equation:
\begin{align*}
  \partial_t E_5 + S_m u \cdot \nabla E_5 + S_m \delta u \cdot \nabla \omega_{n,m}
  =  \nu \Delta \omega^\nu_{n,m} + \nabla \times \delta g,
\end{align*}
which employ
\begin{align*}
  & \partial_t \varphi_{R,y} E_5 + S_m u \cdot \nabla (\varphi_{R,y} E_5) 
  + \varphi_{R,y} S_m \delta u \cdot \nabla \omega_{n,m} \\
  = & \nu \varphi_{R,y} \Delta \omega^\nu_{n,m} 
  + E_5 (S_m u \cdot \nabla \varphi_{R,y}) + \varphi_{R,y} \nabla \times \delta g.
\end{align*}

Multiplying the above equation by $\varphi_{R,y} E_5$ and then integrating it, we have:
\begin{align*}
  & \frac{1}{2} \frac{d}{dt} \| \varphi_{R,y} E_5 \|^2_{L^2} \\
  \leq & \| \varphi_{R,y} S_m \delta u \|_{L^\infty}
  \| \nabla \omega_{n,m} \|_{L^2} \| \varphi_{R,y} E_5 \|_{L^2}
  + C \nu \| \Delta \omega^\nu_{n,m} \|_{L^2} \| \varphi_{R,y} E_5 \|_{L^2} \\
  & + C \| \varphi_{R,y} E_5 \|_{L^2} \sup_y \| \varphi_{R,y} E_5 \|_{L^2} 
  \| S_m u \|_{L^\infty} + \| \varphi_{R,y} \nabla \times \delta g \|_{L^2}
  \| \varphi_{R,y} E_5 \|_{L^2}.
\end{align*}

According to the approximation equation \eqref{eq:approx_app_a},
we also have:
\begin{align*}
  \nu \| \Delta \omega^\nu_{n,m} \|^2_{L^2} 
  \leq \| \nabla P_n \omega^\nu_0 \|^2_{L^2} e^{C \sqrt{m} t}.
\end{align*}

We conclude that:
\begin{align*}
  & \sup_y \| \varphi_{R,y} E_5 \|_{L^2} \\
  \leq & \bigg[ \int_0^t \| \delta u(s) \|_{L^\infty} \, ds +C \nu
  + \sup_y \| \varphi_{R,y} E_5(0) \|_{L^2} 
  + \| \nabla \times \delta g \|_{L^1([0,t]; L^2_{\Uloc})} \bigg] e^{C \exp(C t)}\\
  \leq & \bigg[ \int_0^t \| \delta u(s) \|_{L^\infty} \, ds +C \nu
  + \sup_y \| \varphi_{R,y} E_5(0) \|_{L^2} 
  + \| \nabla \times \delta g \|_{L^1([0,t]; L^\infty)} \bigg] e^{C \exp(C t)}.
\end{align*}

We find when $m \rightarrow \infty$ and $\nu \rightarrow 0$,
\begin{align*}
  \sup_{t \in [0,T]} \sup_y \| \varphi_{R,y} (\omega^\nu_{n,m} - \omega_{n,m}) \|_{L^2} \rightarrow 0.
\end{align*}

Finally, applying the triangular inequality,
we have:
\begin{align*}
  \sup_{t \in [0,T]}\|\delta \omega\|_{L^2_{\Uloc}} \leq 
  \sum_{k=1}^{5} \sup_{t \in [0,T]} \sup_{y} \| \varphi_{R,y} E_k \|_{L^2} \rightarrow 0,
\end{align*}
when $\nu \rightarrow 0$ and $n,m \rightarrow \infty$.

We finish the proof of Theorem \ref{thm:convergence_omega}.
\qed

\section*{Acknowledgments}
H. Tai and X. Zheng were partially supported by the National Natural Science Foundation of China under
grant No. 12371231.

\end{document}